\newtheorem{theorem}{Theorem}[]
\newtheorem{lemma}[theorem]{Lemma}
\titleformat{\section}[display]{\normalfont\huge\bfseries\centering}{\centering\chaptertitlename\thechapter}{10pt}{\Large}
\titlespacing*{\section}{0pt}{0ex}{0ex}
\begin{document}
\title{Geometric Properties of Planar and Spherical Interception Curves} 
\author[Y.N. Aliyev]{Yagub N. Aliyev}
\date{\today}
\address{ADA University, Ahmadbey Aghaoglu str. 61, Baku, 1008, Azerbaijan}
\email{yaliyev@ada.edu.az}
\maketitle

\let\thefootnote\relax
\footnotetext{MSC2020: 53A04, 49N75, 34A34, 91A24, 65D17, 34C05, 33E05, 97G60.} 

\begin{abstract}
In the paper, some geometric properties of the \textit{plane interception curve} defined by a nonlinear ordinary differential equation are discussed. Its parametric representation is used to find the limits of some triangle elements associated with the curve. These limits have some connections with the lemniscate constants $A,B$ and Gauss's constant $G$, which were used to compare with the classical pursuit curve. The analogous spherical geometry problem is solved using a spherical curve defined by the Gudermannian function. It is shown that the results agree with the angle-preserving property of Mercator and Stereographic projections. The Mercator and Stereographic projections also reveal the symmetry of this curve with respect to Spherical and Logarithmic Spirals. The geometric properties of the spherical curve are proved in two ways, analytically and using a lemma about spherical angles. A similar lemma for the planar case is also mentioned. The paper shows symmetry/asymmetry between the spherical and planar cases and the derivation of the properties of these curves as limiting cases of some plane and spherical geometry results.
\end{abstract} 

\bigskip

$\,$

$\,$
\textbf{1. Introduction}

The study of curves and their geometric properties is one of the oldest topics in mathematics. There are many interesting curves on a plane or sphere that are defined kinematically or geometrically and can be studied using the methods of calculus, differential equations, and differential geometry. For example, on a plane, one can mention Tractrix, Pursuit Curve, Lemniscate, Brachistochrone, Cycloid, Conchoid of Nicomedes, Limaçon of Pascal, etc., and on sphere Great Circle, Spherical Spiral, which is a special case of Loxodrome also known as Rhumb Line, etc. \cite{weis2}. The study of curves played a crucial role in the development of Mathematics and its applications. Various attempts to describe the curves led to the discovery of different coordinate systems, such as cartesian, polar, barycentric, etc. The study of geometric properties such as lengths, areas, angles, intersections, etc., led to the development of branches of Mathematics, such as Analytic Geometry, Calculus, Differential Geometry, Differential Equations, Topology, etc. In antiquity, the curves were used for the solution of construction problems such as trisection of an angle, squaring the circle, doubling the cube.
\, In the middle ages and renaissance, curves were used for the solution of various algebraic and geometric problems, for the description of trajectories of projectiles and planets, etc. See \cite{loria,loria2} for the history of curves. In modern times, many problems of science, technology, engineering, art, and mathematics require the study of various curves on different spaces and their geometric properties. Applications of curves include, but are not bound to, aeronautics, architecture, design, sea and air navigation, music, etc. With the dawn of computer graphics, it became easy to create and represent curves and use them in many aspects of life.

In Section 2 of the current paper, the curve, which is named the \textit{Interception Curve}, was studied. This curve can be obtained if one point moves uniformly along a straight line, and another point, initially one unit apart from both the line and the first point on this line, moves with the same speed so that it always stays on the line passing through the first point and the initial position of the second point. This plane curve appears in problems related to the interception of high-speed targets by beam rider missiles (hence the name \textit{Interception Curve}) \cite{elnan,vinh}. The curve was also studied in \cite{wilder,zbornik,bailey}. In \cite{kamke}, at Sect. 1.460 (polar coordinates) and Sect. 1.507 (cartesian coordinates), some methods were proposed to find a formula for this curve. In both cases, the obtained results were probably complicated enough to discourage the further study of this curve. In the present paper, the equation in cartesian coordinates was used  to prove some geometric properties of the curve. The cartesian equation was studied further to find a simple parametrization among other possible parametrizations. This parametrization helped us to prove some more geometric characteristics, which show its connections with Lemniscate constants and Gauss's constant~\cite{todd}. The use of mathematical software to perform experiments and computations and to draw accurate and colorful graphs was very important for the results of the current paper. For example, the connection between the limit of $|PQ|$ below and the lemniscate constants was suggested by Wolfram Alpha. Most of the graphs and pictures in the current paper were created using the website \url{https://www.geogebra.org/}\ (accessed on 17 February 2023).

Some of the curves appear naturally in pursuit-evasion problems. The Interception Curve can also be studied in this context. Comparing the well-known Pursuit Curve with the Interception Curve is done at the end of Section 2.

In Section 3, a similar question for a sphere was asked. In the spherical case, it is shown that the curve defined by the Gudermannian function has similar geometric properties, and these properties are proved first directly using analytic methods. At the end of Section 3, the connection of these characteristics with Mercator and Stereographic projections was discussed. These conformal projections also reveal the symmetry of this curve in connection with Spherical and Logarithmic Spirals.

In Section 4, it is shown how to use a lemma from spherical geometry to prove the mentioned geometric properties of the spherical curve indirectly. The planar case also accepts simple geometric methods, and these were mentioned at the end of Section 4. There is a remarkable symmetry/asymmetry between the planar and spherical cases (See Table 2, and going back and forward between these two cases was very helpful in writing more of these geometric characteristics.

\textbf{2. Planar Curve}\label{sec2}

Let us start with discussing the following question and setting the notation that will be used throughout the paper.

\textbf{Question 1.} Suppose that two points $P(x,y)$ and $Q$, initially at $O(0,0)$ and $A(1,0)$, respectively, move with constant and equal velocities so that $Q$ is on the line $x=1$, and $P$ is on the ray $OQ$. What curve is defined by the point $P$?

Similar questions were discussed in \cite{bailey,wilder}. Polar coordinates can be used to describe the resulting curve. Let us denote $r=|OP|$ and  $\angle AOQ=\theta$.

Since the speeds of the points $P$ and $Q$ are equal, the length of the curve $OP$ and the length of the line segment $AQ$, which is $\tan{\theta}$, are equal for each $\theta$. By using the well-known formula for the length of a curve $r=r(\theta)$, given in polar coordinates, we find that
\begin{equation}
\int_{0}^{\theta} \sqrt{r(t)^2+(r'(t))^2}dt=\tan{\theta}.
\end{equation}
By taking the derivative of both sides of (1) and simplifying, we obtain ODE
\begin{equation}
r(\theta)^2+(r'(\theta))^2=\frac{1}{\cos^4{\theta}},
\end{equation}
with initial condition $r(0)=0$. This nonlinear equation appears in problems related to the interception of high-speed targets by beam rider missiles \cite{elnan}. Therefore, in the current paper, the curve defined by (2) is called the \textit{Interception Curve}. See \cite{aliyev} for discussing a similar problem with $r(0)=1$.

One can find approximate solutions of (2) using series. If we substitute $\theta=0$ in (2), then we find that $r'(0)=\pm 1$. By taking the derivative of (2), and substituting $\theta=0$, we can find $r''(0)=0$. By continuing this process, we can find  $r'''(0)=\pm 1$, etc. Therefore, the two solutions are $r_1(\theta)=\theta+\frac{1}{6} \theta^3+\frac{7}{40} \theta^5+\frac{43}{720} \theta^7+O(\theta^9)$ and $r_2(\theta)=-r_1(\theta)$. Maple 2022  software was used to calculate these 4 nonzero terms of the series, which were then used in GeoGebra to draw Figure \ref{fig1}.

\begin{figure}
{\includegraphics[scale=.2]{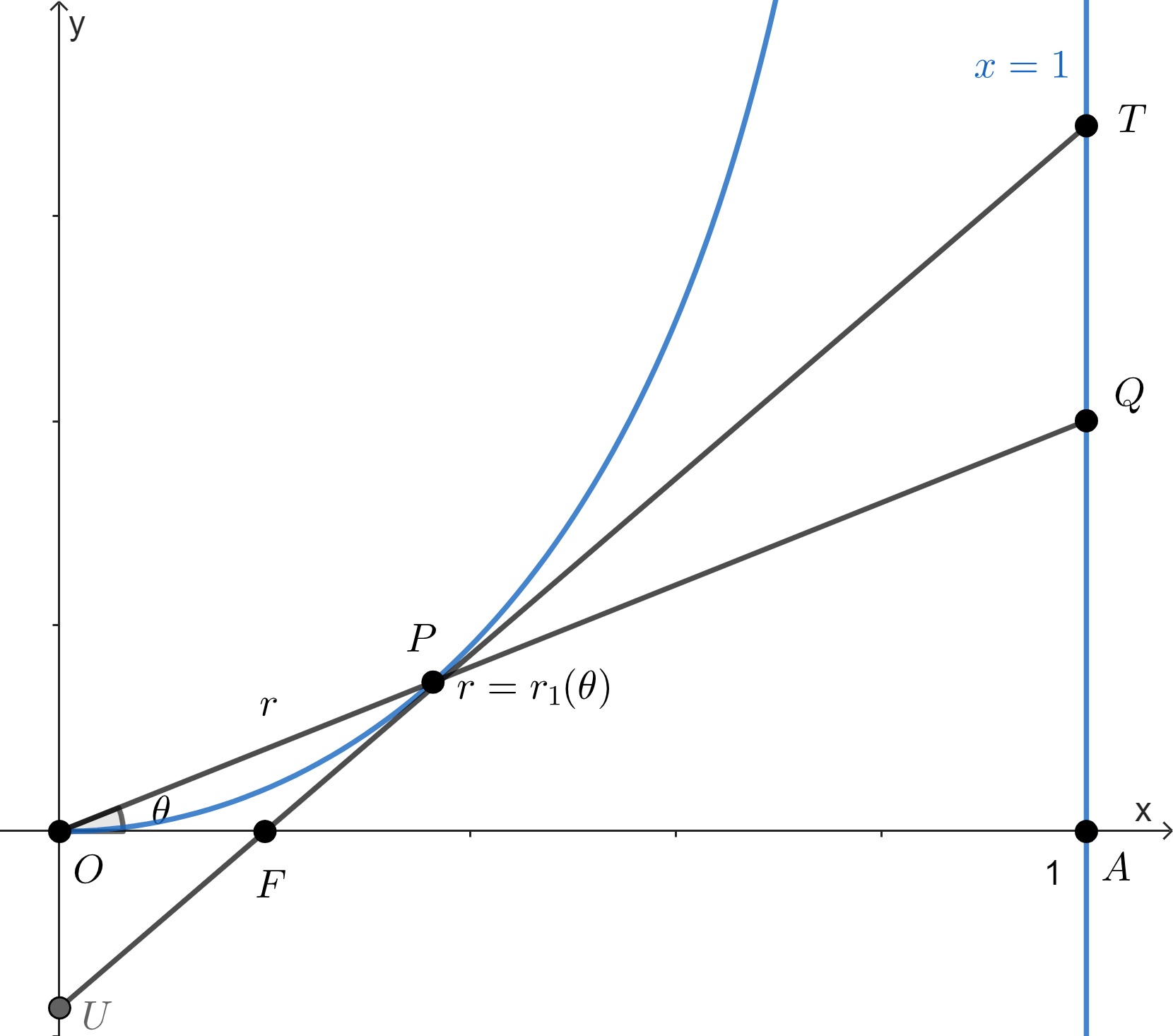}}
\caption{Interception Curve (blue) on a Plane. Its tangent line (black), the line containing its position vector (black), and the line $x=1$ (blue) are also shown.}
\label{fig1}
\end{figure}

Equation (2) can be solved by quadratures \cite{zbornik} (see also \cite{aliyev}). For example, one can follow the steps given in \cite{kamke} to find $r_1(\theta)$.

\begin{itemize}
\item Use substitution $r'=r\cot{u}$ to obtain $r\cos^2{\theta}=\pm a\sin{u}$. By differentiating and excluding $r$ and $r'$, we obtain $u'+2\tan{u}\tan{\theta}=1$ (\cite{kamke}, Sect. 1.460),

\item Use substitutions $\eta(\xi)=\tan{u}$, $\xi=\tan{\theta}$ to obtain Abel’s equation $(\xi^2+1)\eta '=(\eta^2+1)(1-2\xi \eta)$ (\cite{kamke}, Sect. 1.81),

\item Use substitution $\xi^4\eta(\xi)=(\xi^2+1)z+\xi^3$ to obtain one more Abel’s equation $\xi^7 z'+2(\xi^2+1) z^3+5\xi^3 z^2=0$ (\cite{kamke}, Sect. 1.151),

\item Use substitution $v=\frac{1}{z}$ to obtain $\xi^7 vv'=2(\xi^2+1)+5\xi^3 v$ (\cite{kamke}, Sect. 1.185),

\item Use substitution $\xi w=\xi^3 v+1$ to obtain linear equation $\frac{d\xi}{dw}-\frac{\xi w}{2(w^2+1)} +\frac{1}{2(w^2+1)} =0$, which can be solved by quadratures (\cite{kamke}, Sect. 1.185).
\end{itemize}

As one can understand from these steps, the final result will not be a simple expression. In particular, by using the substitution $w=\tan{s}$, the solution of the linear differential equation in the last step can be reduced to the integral $\int{\sec{s}ds}=2F(\frac{x}{2}| 2)+\textrm{const}$, where $F(x|m)$ is the elliptic integral of the first kind with parameter $m=k^2$ (cf. \cite{vinh}).

Note also that $r=r_1(\theta)$ and $x=1$ do not intersect for $0<\theta<\pi/2$, because otherwise, if the points $P$ and $Q$ coincided, then the length of the curve $OP$ would be equal to $AQ$, which is impossible. 

\textbf{Answer to Question 1.} Now, let us try using cartesian coordinates to find the solution of (2) as a parametric curve. First, note that in the cartesian coordinates, (1) can be written as
\begin{equation}
\int_{0}^{x} \sqrt{1+(y'(t))^2}dt=\frac{y}{x}.
\end{equation}
By taking the derivative of both sides of (3) and simplifying, we obtain
\begin{equation}
x^2\sqrt{1+(y'(x))^2}=y'x-y,
\end{equation}
which in particular agrees with $y(0)=0$. A parametrization of the solution can be written by solving (4) for $y$ to obtain $y=y'x-x^2\sqrt{(y')^2+1}=px-x^2\sqrt{p^2+1}$, where $y'=p\ge 0$. By noting that $dy=pdx$ and  $dy=pdx+xdp-2x\sqrt{p^2+1}dx-x^2\frac{p}{\sqrt{p^2+1}}dp$, a linear differential equation $\frac{dx}{dp}-\frac{xp}{2\sqrt{p^2+1}}=\frac{1}{2\sqrt{p^2+1}}$ is obtained. By solving this equation, we obtain the parametrization (cf. \cite{kamke}, Sect. 1.507, where the roles of $x$ and $y$ are interchanged) 
\begin{equation}
    \begin{cases}
x(p)=\frac{1}{\sqrt[4]{p^2+1}}\int_0^p\frac{dt}{2\sqrt[4]{t^2+1}},\\
y(p)=px(p)-(x(p))^2\sqrt{p^2+1}\ (p\ge 0).
    \end{cases}\,
\end{equation}
Of course, there are also other possible parametrizations of this curve (See \linebreak Appendix). This curve also has some interesting geometric properties. Some of these properties (Theorems 1 and 2) can be taken as its definition.

\begin{theorem}
Suppose that the tangent line of the curve (4) at the point $P$, intersects $x$-axis, $y$-axis, and the line $x=1$ at points $F$, $U$, and $T$, respectively (see Figure \ref{fig1}). Then
\begin{enumerate}
\item
$|UP|=|OU|+|TQ|$,
\item
$(1-x)\cdot |UP|=|TQ|$,
\item
$x\cdot |PT|=|TQ|$,
\item
$\sin{\angle QPT}=|OP|\cdot \sin^2{\angle TQP}$,
\end{enumerate}
where $x$ is the abscissa of the point $P(x,y)$.
\end{theorem}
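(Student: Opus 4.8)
The plan is to bring in the slope parameter $p=y'\ge 0$, use equation~(4) to express the relevant points in coordinates, and then observe that parts~(1)--(3) collapse to one-line identities while part~(4) is the law of sines in the triangle $QPT$ (with a one-line polar alternative).

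First I would rewrite~(4). Since $x^{2}\sqrt{1+(y')^{2}}\ge 0$, equation~(4) reads $y'x-y=x^{2}\sqrt{1+p^{2}}$, so along the curve $y=px-x^{2}\sqrt{1+p^{2}}$. Put $s=\sqrt{1+p^{2}}$. The tangent at $P=(x,y)$ is $Y=y+p(X-x)$; intersecting it with $X=0$ and with $X=1$, and intersecting the line $OP$ (that is, $Y=(y/x)X$) with $X=1$, gives
\[
U=(0,\,y-px)=(0,\,-x^{2}s),\qquad T=(1,\,p-x^{2}s),\qquad Q=(1,\,p-xs).
\]
Because the tangent has slope $p$, a horizontal run of length $\ell$ along it has Euclidean length $\ell s$; reading off the horizontal runs between $U,P$ and between $P,T$, and the vertical gap between $T,Q$, we obtain
\[
|UP|=xs,\qquad |OU|=x^{2}s,\qquad |PT|=(1-x)s,\qquad |TQ|=x(1-x)s .
\]
(Here $0<x<1$ throughout: $x=0$ is the starting point $P=O$, where the line $OP$ and hence $Q$ is undefined, and $x<1$ because, by the remark before the Answer to Question~1, the curve never meets the line $x=1$; also $y>0$ there.)

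Parts~(1)--(3) are now immediate: $|OU|+|TQ|=x^{2}s+x(1-x)s=xs=|UP|$; $(1-x)\,|UP|=(1-x)xs=|TQ|$; and $x\,|PT|=x(1-x)s=|TQ|$. For part~(4), the triangle $QPT$ is non-degenerate ($P$ lies off the line $x=1$), so the law of sines gives $\sin\angle QPT=\dfrac{|QT|}{|PT|}\,\sin\angle TQP=x\sin\angle TQP$. The angle at $Q$ is the angle between the vertical line $x=1$ (ray $QT$) and the line $OQ$ (ray $QP$), whose inclination is $\theta=\angle AOQ$ with $\tan\theta=y/x$; hence $\sin\angle TQP=\cos\theta=x/|OP|$, and therefore $\sin\angle QPT=x\cdot(x/|OP|)=|OP|\sin^{2}\angle TQP$. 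Equivalently, in polar coordinates the angle $\psi=\angle QPT$ between the tangent and the radius vector obeys the classical relation $\tan\psi=r/r'$, so $\sin\psi=r/\sqrt{r^{2}+(r')^{2}}=r\cos^{2}\theta$ by the ODE~(2), which is the same equality.

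The coordinate computations are routine; the one place that needs attention is the configuration bookkeeping that makes the four relations hold exactly rather than up to sign: that $U$ lies on the negative $y$-axis, that $T$ lies above $Q$ on the line $x=1$ (so $T_{Y}-Q_{Y}=x(1-x)s>0$), that $0<x<1$ and $y>0$ on the relevant arc, and --- for the polar derivation --- that $\angle QPT$ really is the polar tangent--radius angle $\psi$ and not its supplement, which follows from $r'>0$ together with $|OP|<|OQ|$ (i.e.\ $P$ between $O$ and $Q$). None of this is deep; it is simply what has to be verified to pass from the slope-parameter picture to the four stated equalities.
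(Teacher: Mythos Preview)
Your proof is correct and follows essentially the same route as the paper: compute the coordinates of $U$, $T$, $Q$, read off the four lengths, and reduce parts (1)--(3) to equation~(4), then obtain part~(4) from the sine rule in $\triangle PQT$. The only cosmetic differences are that you substitute~(4) at the outset (writing $y=px-x^{2}s$) so the length identities become literally one line each, and you add the configuration/sign checks and a polar alternative for~(4), none of which appears in the paper's terser argument.
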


\begin{proof}
The equation of the line $OQ$ is $\frac{Y-y}{X-x}=\frac{y}{x}$. By substituting $X=1$ in this equation, we can find the coordinates of point $Q\left(1,\frac{y}{x}\right)$. The equation of tangent line $UT$ is $\frac{Y-y}{X-x}=y'$. By substituting $Y=0$, $X=0$, and $X=1$ in this equation, we can find the coordinates of points $F$, $U$, and $T$, respectively:
$$
F\left(x-\frac{y}{y'},0\right), U\left(0,y-xy'\right), T\left(1,y+(1-x)y'\right).
$$
We can now find
$$
|UP|=x\sqrt{1+(y')^2}, |OU|=-y+xy',
$$
$$
|TQ|=y+y'-xy'-\frac{y}{x}=(1-x)\left(y'-\frac{y}{x}\right), |PT|=(1-x)\sqrt{1+(y')^2}.
$$
It remains only to put these formulas in the required equalities to check that each of the first three equalities is equivalent to (4). The fourth part follows from the previous equalities and sine theorem for $\triangle PQT$.
\qedhere
\end{proof}

It is also possible to prove these geometric properties using plane geometry methods, which we will discuss in Section 4. Using these plane geometry methods, one can also prove the following geometric property.

\begin{theorem}
The radius of the circle through $O$ and tangent to the line $UT$ at the point $P$ is equal to the radius of the circle through $O$ and tangent to the line $AT$ at the point $Q$ (see Figure \ref{fig1}).
\end{theorem}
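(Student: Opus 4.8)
The plan is to show that both radii equal the same closed form, $\tfrac12|OQ|^2=\tfrac12\sec^2\theta$. The only external ingredient, besides Theorem 1(4), is an elementary lemma: if a circle passes through points $X,Y$ and is tangent at $Y$ to a line $\ell$, then its radius is $\dfrac{|XY|}{2\sin\psi}$, where $\psi$ is the angle between the chord $XY$ and $\ell$ at $Y$. Indeed, the centre $C$ lies on the normal to $\ell$ at $Y$, so $\angle CYX=90^\circ-\psi$; since $|CX|=|CY|=R$, the foot of the perpendicular from $C$ to $XY$ bisects $XY$, giving $\tfrac12|XY|=R\cos\angle CYX=R\sin\psi$.

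First I would pin down the incidences visible in Figure \ref{fig1}: the points $O$, $P$, $Q$ are colinear with $P$ between $O$ and $Q$ (since $Q=\tfrac1x(x,y)$ and $0\le x<1$), while $A=(1,0)$, $Q=(1,y/x)$ and $T=(1,y+(1-x)y')$ all lie on the vertical line $x=1$, with $A$ below $Q$ below $T$ (using $y'>y/x$, which is just (4)). In particular ``the line $AT$'' is the line $x=1$. Applying the lemma with $(X,Y,\ell)=(O,Q,\{x=1\})$ and using the right angle of $\triangle OAQ$ at $A$, the relevant angle is $\angle OQA$ with $\sin\angle OQA=|OA|/|OQ|=1/|OQ|$, so $R_2=\tfrac12|OQ|^2$.

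Applying the lemma with $(X,Y,\ell)=(O,P,UT)$ gives $R_1=\dfrac{|OP|}{2\sin\angle QPT}$, because the angle between the chord $OP$ and the line $UT$ at $P$ equals $\angle OPT$, which is supplementary to $\angle QPT$, so the sines agree. It then remains to check $|OP|=|OQ|^2\sin\angle QPT$; since the colinearities above give $\sin\angle TQP=\sin\angle OQA=1/|OQ|$, this is exactly Theorem 1(4), $\sin\angle QPT=|OP|\sin^2\angle TQP$. Therefore $R_1=R_2=\tfrac12|OQ|^2=\tfrac12\sec^2\theta$.

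The only genuine subtlety is this angle and orientation bookkeeping — verifying that $P$ lies between $O$ and $Q$, that $A$ and $T$ sit on opposite sides of $Q$ along $x=1$, and choosing the correct side of $UT$ for the first circle's centre. If one prefers to bypass it, there is a short coordinate alternative: the circle through $O$ tangent to $x=1$ at $(1,y/x)$ has centre $(1-R_2,\,y/x)$, forcing $R_2=\tfrac12\bigl(1+(y/x)^2\bigr)$; and writing the first circle's centre as $P+\tfrac{R_1}{\sqrt{1+p^2}}(-p,1)$ with $p=y'$ and imposing that it be at distance $R_1$ from $O$ yields $R_1=\dfrac{(x^2+y^2)\sqrt{1+p^2}}{2(px-y)}$, which collapses to $\tfrac12\bigl(1+(y/x)^2\bigr)$ upon substituting $x^2\sqrt{1+p^2}=px-y$ from (4). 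Either route delivers $R_1=R_2$.
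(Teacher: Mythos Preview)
Your proof is correct. The coordinate alternative you sketch at the end is essentially the paper's own argument: the paper finds the two centres explicitly, obtains the radii $\dfrac{(x^2+y^2)\sqrt{1+(y')^2}}{2(xy'-y)}$ and $\dfrac{x^2+y^2}{2x^2}$, and equates them via (4).

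Your primary route, however, is genuinely different and cleaner. Instead of computing centres, you invoke the tangent--chord relation $R=\dfrac{|XY|}{2\sin\psi}$ (equivalently the inscribed angle theorem in the limiting tangent form) and thereby reduce the whole statement to Theorem~1(4), which the paper has just proved. This yields the pleasant closed form $R_1=R_2=\tfrac12|OQ|^2=\tfrac12\sec^2\theta$, a fact the paper does not isolate. The trade-off is that the paper's coordinate proof is self-contained and does not need the incidence/orientation bookkeeping you flag (that $P$ lies between $O$ and $Q$, that $T$ lies above $Q$, so that $\sin\angle TQP=\sin\angle OQA$); your approach is shorter and more conceptual, but only once that bookkeeping is checked. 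Both arguments ultimately rest on the same defining equation (4), so the difference is one of packaging rather than depth.
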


\begin{proof}
The center of the circle through $O$ and tangent to the line $UT$ at the point $P$, is the intersection of perpendicular bisector $Y-\frac{y}{2}=-\frac{x}{y}\left(X-\frac{x}{2}\right)$ of line segment $OP$, and line  $Y-y=-\frac{1}{y'}\left(X-x\right)$ perpendicular to the line $PT$ at point $P$. We find that $X=x-\frac{(x^2+y^2)y'}{2(xy'-y)}$ and $Y=y+\frac{x^2+y^2}{2(xy'-y)}$. Therefore, the radius of this circle is $$\sqrt{\left(X-x\right)^2+\left(Y-y\right)^2}=\frac{x^2+y^2}{2(xy'-y)}\sqrt{1+(y')^2}.$$
Similarly, the center of the circle through $O$ and tangent to the line $AT$ at the point $Q$, is the intersection of perpendicular bisector $Y-\frac{y}{2x}=-\frac{x}{y}\left(X-\frac{1}{2}\right)$ of line segment $OQ$, and line  $Y=\frac{y}{x}$ perpendicular to the line $AT$ at point $Q$. Using this, we find that the radius of this circle is $1-X=\frac{x^2+y^2}{2x^2}$. It remains only to check that by (4), these radii are equal.
\qedhere
\end{proof}

The following result shows that there is a connection between the interception curve and the lemniscate functions.

\begin{theorem}
The length of the side $PQ$, and the difference of lengths of the other two sides of $\triangle PQT$ approach to the same limit $B^2$ as $x \rightarrow 1^-$: $$\lim_{x \rightarrow 1^-} |PQ| = \lim_{x \rightarrow 1^-} \left(|PT|-|TQ|\right)=\frac{\Gamma\left(\frac{3}{4}\right)^4}{2\pi}= B^2 \approx 0.3588850048,$$ where $B$ is the second lemniscate constant.
\end{theorem}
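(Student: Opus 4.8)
The plan is to push everything through the parametrization (5). First observe that $x\to1^-$ corresponds to $p=y'\to+\infty$: the curve never reaches $x=1$ (as noted above) but approaches it with a vertical tangent, and indeed, writing $N(p)=\int_0^p\frac{dt}{2(1+t^2)^{1/4}}$ so that (5) reads $x(p)=N(p)\,(1+p^2)^{-1/4}$, one checks $x'(p)>0$ (using $N(p)<\sqrt p$), $x(p)<1$, and $x(p)\to1$; thus $p\mapsto x(p)$ is an increasing bijection of $[0,\infty)$ onto $[0,1)$.

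For the difference of the sides $PT$ and $TQ$: the proof of Theorem 1 gives $|PT|=(1-x)\sqrt{1+p^2}$, and Theorem 1(3) gives $|TQ|=x\,|PT|$, so
\[
|PT|-|TQ|=(1-x)^2\sqrt{1+p^2}=\Bigl((1+p^2)^{1/4}-N(p)\Bigr)^2 ,
\]
the last equality coming from $1-x=\bigl((1+p^2)^{1/4}-N(p)\bigr)(1+p^2)^{-1/4}$. Since $(1+p^2)^{1/4}-\sqrt p\to0$ and $\sqrt p=\int_0^p\frac{dt}{2\sqrt t}$,
\[
\lim_{p\to\infty}\Bigl((1+p^2)^{1/4}-N(p)\Bigr)=\int_0^\infty\Bigl(\tfrac1{2\sqrt t}-\tfrac1{2(1+t^2)^{1/4}}\Bigr)dt=:B ,
\]
a convergent improper integral (integrand $\sim\tfrac1{2\sqrt t}$ at $0$, $=O(t^{-5/2})$ at $\infty$). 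Hence $\lim_{x\to1^-}\bigl(|PT|-|TQ|\bigr)=B^2$.

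For $|PQ|$ I would deduce the equality of the two limits from the law of cosines. Since $P$ lies on the segment $OQ$, $|PQ|=|OQ|-|OP|=\sec\theta-r=(1-x)\sec\theta$ (with $r=|OP|=x\sec\theta$), while the angle of $\triangle PQT$ at $T$ equals $\arctan(1/p)$, because the tangent line $PT$ has slope $p$ whereas $TQ$ lies on the vertical line $x=1$. Therefore
\[
|PQ|^2=|PT|^2+|TQ|^2-2|PT||TQ|\,\tfrac{p}{\sqrt{1+p^2}}=(|PT|-|TQ|)^2+\frac{2x(1-x)^2\sqrt{1+p^2}}{p+\sqrt{1+p^2}},
\]
and the extra term tends to $0$ because $(1-x)^2\to0$ while $\frac{\sqrt{1+p^2}}{p+\sqrt{1+p^2}}\to\frac12$. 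Thus $\lim_{x\to1^-}|PQ|^2=\lim_{x\to1^-}(|PT|-|TQ|)^2=B^4$, i.e. $\lim_{x\to1^-}|PQ|=B^2$.

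It remains to evaluate $B$ in closed form. The substitution $w=\frac{t^2}{1+t^2}$ (so $dt=\frac{dw}{2w^{1/2}(1-w)^{3/2}}$, $(1+t^2)^{1/4}=(1-w)^{-1/4}$) sends $N(p)$ to $\int_0^{Z}\frac{dw}{4w^{1/2}(1-w)^{5/4}}$ with $Z=\frac{p^2}{1+p^2}$, while $\sqrt p=Z^{1/4}(1-Z)^{-1/4}=\int_0^Z\frac14 w^{-3/4}(1-w)^{-5/4}dw$ (since $\frac{d}{dw}\bigl(w^{1/4}(1-w)^{-1/4}\bigr)=\frac14 w^{-3/4}(1-w)^{-5/4}$); letting $p\to\infty$ gives
\[
B=\frac14\int_0^1\bigl(w^{-3/4}-w^{-1/2}\bigr)(1-w)^{-5/4}\,dw ,
\]
a convergent integral, since $w^{-3/4}-w^{-1/2}=O(1-w)$ at $w=1$. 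To evaluate it, let $\Phi(s)=\int_0^1(w^{-3/4}-w^{-1/2})(1-w)^{s-1}dw$, holomorphic for $\operatorname{Re}s>-1$ (same reason); for $\operatorname{Re}s>0$ it equals $\mathrm{B}(\tfrac14,s)-\mathrm{B}(\tfrac12,s)=\Gamma(s)\bigl(\tfrac{\Gamma(1/4)}{\Gamma(s+1/4)}-\tfrac{\Gamma(1/2)}{\Gamma(s+1/2)}\bigr)$, whose right-hand side is in fact holomorphic for $\operatorname{Re}s>-1$ (the pole of $\Gamma(s)$ at $s=0$ is cancelled), so the identity extends to $s=-\tfrac14$; using that $1/\Gamma$ is entire (so $1/\Gamma(0)=0$) and $\Gamma(-\tfrac14)=-4\Gamma(\tfrac34)$,
\[
B=\tfrac14\Phi(-\tfrac14)=\tfrac14\,\Gamma(-\tfrac14)\Bigl(\tfrac{\Gamma(1/4)}{\Gamma(0)}-\tfrac{\Gamma(1/2)}{\Gamma(1/4)}\Bigr)=-\frac{\Gamma(-1/4)\Gamma(1/2)}{4\,\Gamma(1/4)}=\frac{\Gamma(3/4)\sqrt\pi}{\Gamma(1/4)} .
\]
The reflection formula $\Gamma(\tfrac14)\Gamma(\tfrac34)=\pi\sqrt2$ turns this into $B=\Gamma(3/4)^2/\sqrt{2\pi}$, whence $B^2=\Gamma(3/4)^4/(2\pi)$; and as $\int_0^1\frac{t^2\,dt}{\sqrt{1-t^4}}=\tfrac14\,\mathrm{B}(\tfrac34,\tfrac12)=\Gamma(3/4)^2/\sqrt{2\pi}$ as well, this is exactly the square of the second lemniscate constant. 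The main obstacle is this last evaluation: neither $\int_0^1 w^{-3/4}(1-w)^{-5/4}dw$ nor $\int_0^1 w^{-1/2}(1-w)^{-5/4}dw$ converges on its own, so one must argue carefully — by analytic continuation as above, or by subtracting off the common $(1-w)^{-1/4}$-type divergence at $w=1$ — that their "difference" is the difference of the continued Beta functions. Everything else is routine asymptotics together with the two geometric identities already in Theorem 1.
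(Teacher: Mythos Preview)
Your argument is correct and arrives at the same key quantity $(1-x)^2\sqrt{1+p^2}=\bigl((1+p^2)^{1/4}-N(p)\bigr)^2$, but the route is genuinely different from the paper's in three places. First, you start with $|PT|-|TQ|$, obtained in one line from Theorem~1(3) as $(1-x)|PT|$, and only afterwards recover $|PQ|$ via the law of cosines in $\triangle PQT$; the paper does the opposite, computing $|PQ|=\frac{(1-x)y}{x}$ directly from~(5) and then treating $|PT|-|TQ|$. Second, to evaluate $\lim\bigl((1+p^2)^{1/4}-N(p)\bigr)$ the paper writes $(1+p^2)^{1/4}=1+\int_0^p\frac{t\,dt}{2(1+t^2)^{3/4}}$ and reduces, via $t=\tan s$, to showing $\int_0^{\pi/2}\frac{\sqrt{\cos s}}{2(1+\sin s)}\,ds=1-B$ by an explicit antiderivative; you instead compare with $\sqrt{p}=\int_0^p\frac{dt}{2\sqrt t}$ and obtain the integral $\tfrac14\int_0^1(w^{-3/4}-w^{-1/2})(1-w)^{-5/4}\,dw$, which you then evaluate by analytically continuing $\mathrm{B}(\tfrac14,s)-\mathrm{B}(\tfrac12,s)$ to $s=-\tfrac14$. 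Your computation is clean and your handling of the divergent pieces via analytic continuation is sound; the paper's argument is more elementary (no $\Gamma$-function machinery beyond the definition of $B$), while yours is more systematic and would generalize more readily.

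One small wording issue: your reason for the law-of-cosines remainder $\dfrac{2x(1-x)^2\sqrt{1+p^2}}{p+\sqrt{1+p^2}}\to0$ is not quite right as stated, since $(1-x)^2$ and $\sqrt{1+p^2}$ cannot be separated (their product tends to $B^2$, not $0$). The correct split is $\bigl[(1-x)^2\sqrt{1+p^2}\bigr]\cdot\dfrac{2x}{p+\sqrt{1+p^2}}\to B^2\cdot0=0$; this is a one-line fix.
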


\begin{proof}
First, we calculate that $|PQ|=\sqrt{(1-x)^2+\left(\frac{(1-x)y}{x}\right)^2}$. Since $x(p)\rightarrow 1$ as $p \rightarrow +\infty$, $\lim_{x \rightarrow 1^-} |PQ| =\lim_{p \rightarrow +\infty} \left(\frac{(1-x)y}{x}\right)$. On the other hand, using the parametrization (5), we can write
$$
\frac{(1-x(p))y(p)}{x(p)}=(1-x(p))^2\sqrt{p^2+1}-\frac{1-x(p)}{\sqrt{p^2+1}+p}.
$$
The subtrahend $\frac{1-x(p)}{\sqrt{p^2+1}+p}$ in the last expression, approches to 0 as $p \rightarrow +\infty$. So, it remains only to find
$$
\lim_{p \rightarrow +\infty} (1-x(p))^2\sqrt{p^2+1}=\lim_{p \rightarrow +\infty} \left(\sqrt[4]{p^2+1}-\int_0^p\frac{dt}{2\sqrt[4]{t^2+1}}\right)^2.
$$
Since $\sqrt[4]{p^2+1}=1+\int_0^p\frac{tdt}{2\sqrt[4]{(t^2+1)^3}}$, 
$$
\lim_{p \rightarrow +\infty} (1-x(p))^2\sqrt{p^2+1}= \left(1-\int_0^{+\infty}\frac{\left(\sqrt{t^2+1}-t \right)dt}{2\sqrt[4]{(t^2+1)^3}}\right)^2.
$$
By substituting $t=\tan{s}$, where $0\le s<\frac{\pi}{2}$, in the improper integral, we obtain after simplifications
$$
\int_0^{+\infty}\frac{\left(\sqrt{t^2+1}-t \right)dt}{2\sqrt[4]{(t^2+1)^3}}=\int_0^{\frac{\pi}{2}}\frac{\sqrt{\cos{s}}ds}{2(1+\sin{s})}.
$$
It is known that (\cite{todd,cox})
$$
\int_0^{\frac{\pi}{2}}\frac{\sqrt{\cos{s}}ds}{2}=\int_0^{1}\frac{x^2dx}{\sqrt{1-x^4}}=\frac{\Gamma\left(\frac{3}{4}\right)^2}{\sqrt{2\pi}}= B.
$$
We only need to show that
$$
\int_0^{\frac{\pi}{2}}\frac{\sqrt{\cos{s}}ds}{2(1+\sin{s})}=1-B,
$$
which follows from
$$
\int_0^{\frac{\pi}{2}}\frac{\sqrt{\cos{s}}ds}{2(1+\sin{s})}+\int_0^{\frac{\pi}{2}}\frac{\sqrt{\cos{s}}ds}{2}=
$$
$$
\int_0^{\frac{\pi}{2}}\frac{\sqrt{\cos{s}}}{2}\left(\frac{1}{1+\sin{s}}+1\right)ds=\frac{\sqrt{\cos{s}}}{2}\left(\frac{4\sin{\frac{s}{2}}}{\sin{\frac{s}{2}}+\cos{\frac{s}{2}}}-2\right)\biggm\lvert_0^{\frac{\pi}{2}}=1.
$$
Similarly,
$$
|PT|-|TQ|=(1-x)\left(\sqrt{1+(y')^2}-y'+\frac{y}{x}\right).
$$
Therefore,
$$
 \lim_{x \rightarrow 1^-} \left(|PT|-|TQ|\right)=\lim_{p \rightarrow +\infty} (1-x(p))\left(\sqrt{p^2+1}-p+\frac{y(p)}{x(p)}\right)=
$$
$$
=\lim_{p \rightarrow +\infty} \frac{1-x(p)}{\left(\sqrt{p^2+1}+p\right)}+\lim_{p \rightarrow +\infty}\left(\frac{(1-x(p))y(p)}{x(p)}\right)=0+B^2=B^2.
$$
\qedhere
\end{proof}
Note that $B^2=\frac{1}{4G^2}$, where $G$ is Gauss's constant defined by arithmetic–geometric mean \cite{cox,todd}.

There are other strategies for making the point $P$ follow the point $Q$ so that $\lim_{x \rightarrow 1^-} |PQ|$ is smaller. However, it is possible to show that there is no optimal strategy. Consider, for example, this pursuit evasion game. Suppose that there is a barrier along the line $x=1$ and an evader is at point $(1,0)$. A pursuer at the origin starts to chase with a constant speed, and the evader moves with the same speed along the line $x=1$. One strategy for the pursuer can be to move always towards the evader. This gives rise to a trajectory for the pursuer, commonly known as \textit{pursuit curve} \cite{savelov} (page 256). However, the pursuer can also move towards a future position of the evader instead of the evader's current position. One of the many possible ways for the pursuer to do this is to stay on the line connecting the origin with the evader. The optimal strategy for the evader is to run along the barrier $x=1$ by never changing its direction. However, there is no optimal trajectory for the pursuer, so any candidate for the optimal curve can be improved to make $\lim_{x \rightarrow 1^-} |PQ|$ smaller, by replacing one of its curved parts with a line segment and, therefore, shortening that part. Because of the presence of the barrier $x=1$, the curved part should always exist. Note that any straight line through the origin intersects $x=1$, except the line $x=0$, which obviously can not be the optimal curve. One can pose a similar question for a sphere, where the barrier and the evader are on a great circle of the sphere, and the pursuer is at one of the poles of the great circle. This will be done in the following section.

Consider also this trajectory: first $P$ moves with constant speed from $O$ to the point with coordinates $(1,k)$, along the line $y=kx$, where $k\ge 0$, $0\le x\le 1$, while $Q$ moves with the same speed from $A$ to the point with coordinates $(1,\sqrt{k^2+1})$, after which $P$ follows $Q$ along the line $x=1$ and the distance $|PQ|=\sqrt{k^2+1}-k$ does not change. The value of $\sqrt{k^2+1}-k=\frac{1}{\sqrt{k^2+1}+k}$ can be made arbitrarily close to zero. On the other hand, for any chosen curve $\lim_{x \rightarrow 1^-} |PQ|=0$ is not possible. Indeed, $|PQ|\ge |(PQ)_y|$, where $(PQ)_y$ is the projection of $PQ$ onto $y$-axis, and $|(PQ)_y|$ is a non-decreasing function of time (or $x$). Therefore, for any given curve with a particular positive value of $\lim_{x \rightarrow 1^-} |PQ|$, there is always a more efficient curve making the limit even smaller. In view of these observations, it would be interesting to compare the curve in Figure \ref{fig1} with the mentioned trajectory of $P$ chasing $Q$, the pursuit curve.

\begin{figure}
{\includegraphics[scale=.6]{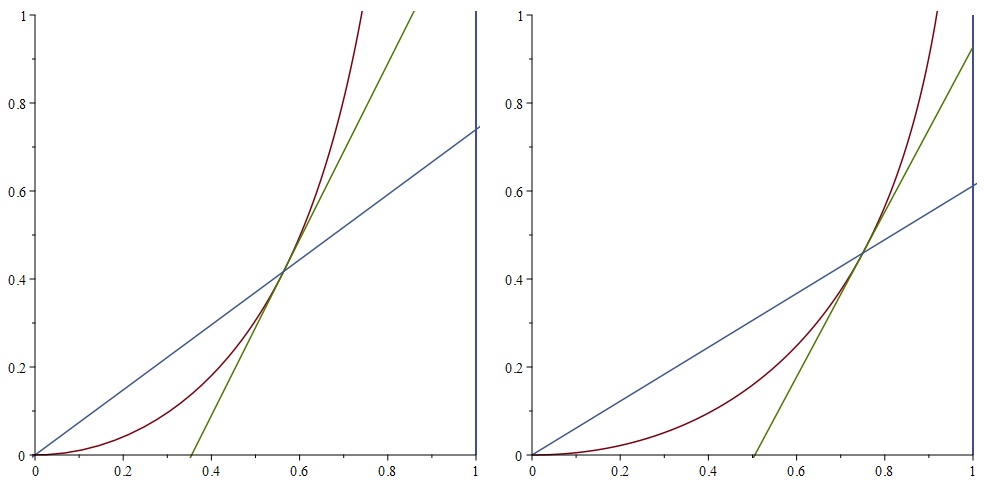}}
\caption{Comparison of Interception (left, red) and Pursuit (right, red) Curves. Created using parametrization (5), and Bouguer's formula. The tangent lines (green), the lines containing the position vectors (thin blue) of the curves, and the lines $x=1$ (thick blue) are also shown.}
\label{fig2}
\end{figure}

Figure \ref{fig2} was created using Maple 2022. The software Maple 2022 was also used to simplify limits and integrals in the paper.

Suppose now that $P$ starts to move from $O$ with constant speed along such a curve that the tangent line of the curve at $P$ always passes through $Q$, which moves as before with the same constant speed from $A$ upwards, along the line $x=1$. The obtained curve is known as \textit{Pursuit Curve} and defined by the differential equation $\sqrt{1+(y')^2}=(1-x)y'',$ with initial conditions $y(0)=y'(0)=0$. See, for example, \cite{savelov} (page 256), \cite{loria2} (page 241 of 1911 edition, page 607 of 1902 edition) for the solution found by P. Bouguer in 1732: $$y=\frac{1}{4}(1 - x)^2 - \frac{1}{2}\ln{(1 - x)} - \frac{1}{4}.$$ One can check that for this curve $|PQ|=\frac{1}{2}+\frac{(1-x)^2}{2}$, and, therefore, $\lim_{x \rightarrow 1^-} |PQ|=\frac{1}{2}>B^2$, which means that the interception curve overperforms the pursuit curve in this regard.

\textbf{3. Spherical Curve}\label{sec3}

We will now study analogous questions for spherical geometry.

\textbf{Question 2.} Suppose that two points $P$ and $Q$, initially at $B(0,0,1)$ and $A(1,0,0)$, respectively, move with constant and equal velocity so that $Q$ is on the great circle $z=0$, $x^2+y^2=1$ of sphere $x^2+y^2+z^2=1$ with center $O(0,0,0)$, and $P$ is on the great circle through $B$ and $Q$ of the sphere. What curve is defined by the point $P$? (see Figure \ref{fig3})

\textbf{Answer.} We will use spherical coordinates to describe the resulting \textit{interception curve}. Denote $\angle AOQ=\theta$ and $\angle POB=\phi$. Since $\rho=|OP|=1$, for the coordinates of point $P(x,y,z)$, we can write $x=\cos{\theta}\sin{\phi}$, $y=\sin{\theta}\sin{\phi}$, and $z=\cos{\phi}$, where we assume that $\phi=\phi(\theta)$ is a function of $\theta$. So, $x'_\theta=-\sin{\theta}\sin{\phi} +\phi'\cos{\theta}\cos{\phi}$, $y'_\theta=\cos{\theta}\sin{\phi}+\phi'\sin{\theta}\cos{\phi}$ and $z'_\theta=-\phi'\sin{\phi}$.  Since the points $P$ and $Q$ travel equal distances, by using the well-known formula from vector calculus for the length of a parametrically defined curve, we can write
\begin{equation}
\int_{0}^{\theta} \sqrt{(x'(t))^2+(y'(t))^2+(z'(t))^2}dt=\theta,
\end{equation}
where $\theta$ on the right hand side of the last equality is the distance traveled by $Q$. By taking the derivative of both sides of (6) with respect to $\theta$ and simplifying using the formula for $x'_\theta$, $y'_\theta$, and $z'_\theta$, we obtain
$\sqrt{\sin^2{\phi}+(\phi'(\theta))^2}=1,$
with initial condition $\phi(0)=0$. By solving this ODE, we obtain
\begin{equation}
\phi=2\tan^{-1}{e^\theta}-\frac{\pi}{2}.
\end{equation}
Note that (7), which can also be written as $\phi=\tan^{-1}{\sinh{\theta}}$, is sometimes called Gudermannian function gd\smallskip$(x)$. See for example, \cite{olver}, Sect. 4.23(viii) and \cite{zwil}, Sect. 6.12. Gudermannian function gd\smallskip$(x)$ is the vertical component of Mercator projection \cite{weis}, and we will see below that this connection with the Mercator projection is not a coincidence.

\begin{figure}
{\includegraphics[scale=.65]{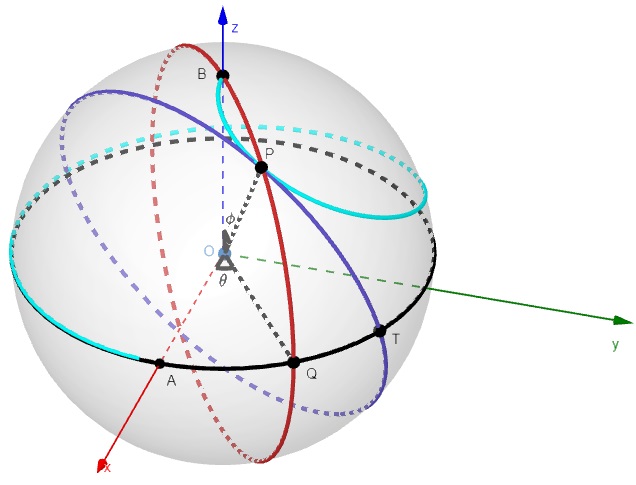}}
\caption{Interception Curve (light blue) on a Unit Sphere. Its tangent great circle (dark blue), meridian (red) and equator (black) great circles are also shown.}
\label{fig3}
\end{figure}

\begin{theorem}
$\lim_{\theta \rightarrow \infty} |PQ|=0.$
\end{theorem}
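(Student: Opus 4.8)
The plan is to exploit the fact that, by construction, $P$ and $Q$ always lie on a common great circle — the meridian through $Q$ — so that $|PQ|$ is governed by a single angular quantity, the difference of colatitudes, and then to show this quantity tends to $0$ using the explicit solution (7).

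First I would record the coordinates. The point $Q=(\cos\theta,\sin\theta,0)$ lies on the equator $z=0$, so it has colatitude $\frac{\pi}{2}$, while $P=(\cos\theta\sin\phi,\sin\theta\sin\phi,\cos\phi)$ has colatitude $\phi=\phi(\theta)$ and the \emph{same} longitude $\theta$. Hence $P$ and $Q$ lie on the meridian great circle of longitude $\theta$, and the spherical arc distance between them along this great circle is simply $\bigl|\tfrac{\pi}{2}-\phi(\theta)\bigr|$. Equivalently, expanding the chord length and using only $\sin^2+\cos^2=1$,
$$
|PQ|^2=(\sin\phi-1)^2\cos^2\theta+(\sin\phi-1)^2\sin^2\theta+\cos^2\phi=2\bigl(1-\sin\phi\bigr).
$$

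Next I would substitute (7). From $\phi=2\arctan e^{\theta}-\tfrac{\pi}{2}$, equivalently $\phi=\arctan(\sinh\theta)$, one gets $\sin\phi=\dfrac{\sinh\theta}{\cosh\theta}=\tanh\theta$, so
$$
|PQ|^2=2(1-\tanh\theta)=\frac{4}{e^{2\theta}+1},\qquad\text{i.e.}\qquad |PQ|=\frac{2}{\sqrt{e^{2\theta}+1}}.
$$
Letting $\theta\to\infty$ gives $|PQ|\to 0$ (in fact exponentially fast); the same conclusion follows from the arc-length formula, since $\phi(\theta)=2\arctan e^{\theta}-\tfrac{\pi}{2}\to\tfrac{\pi}{2}$ as $\theta\to\infty$.

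There is essentially no obstacle here: the statement is a one-line consequence of the explicit formula for the curve. The only point deserving a remark is that $|PQ|$ measured as a great-circle arc and as a Euclidean chord are related by $\mathrm{arc}=2\arcsin(\mathrm{chord}/2)$, so both tend to $0$ together and the result is independent of which convention is adopted; I would present the chord computation above since it requires no identity beyond the Pythagorean one and the Gudermannian relation $\sin(\arctan\sinh\theta)=\tanh\theta$.
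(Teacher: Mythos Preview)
Your proof is correct and follows essentially the same idea as the paper's own argument: both use that $P$ and $Q$ share the longitude $\theta$, so $|PQ|$ is controlled by $\tfrac{\pi}{2}-\phi(\theta)$, and then invoke (7) to see $\phi(\theta)\to\tfrac{\pi}{2}$. You go a bit further by computing the exact chord length $|PQ|=2/\sqrt{e^{2\theta}+1}$ and remarking on the arc--chord equivalence, but the underlying approach is the same.
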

\begin{proof} One can observe that $$\lim_{\theta \rightarrow \infty} \phi(\theta)=\lim_{\theta \rightarrow \infty} \left(2\tan^{-1}{e^\theta}-\frac{\pi}{2}\right)=2\cdot\frac{\pi}{2}-\frac{\pi}{2}=\frac{\pi}{2},$$ and, therefore, the distance between the points $P(1,\theta,\phi)$ and $Q(1, \theta, \frac{\pi}{2})$ approaches zero as the curve winds around the sphere.
\qedhere
\end{proof}
This curve has other geometric properties. In the following, $\widehat{XY}$ means the spherical distance between points $X$ and $Y$ on a sphere. For a unit sphere with center $O$, $\widehat{XY}=\angle{XOY}$.

\begin{theorem}
If a great circle tangent to the curve $\phi=2\tan^{-1}{e^\theta}-\frac{\pi}{2}$ at point $P$ of unit sphere intersects the great circle on the plane $xOy$ at point $T$ (see Figure \ref{fig3}), then
\begin{enumerate}
\item
the sum of the lengths of the arcs $PT$ and $TQ$ is not dependent on $\theta$, and $\widehat{PT}+ \widehat{TQ}=\frac{\pi}{2}$,
\item
as $\theta$ increases, $ \widehat{TQ}$ increases, $\widehat{PT}$ decreases, and both approach to $\frac{\pi}{4}$, as $\theta \rightarrow \infty$.
\item
spherical angle $\angle QPT$ is equal to $\widehat{BP}$,
\item
spherical angle $\angle BPT$ is equal to $\widehat{PQ}+\frac{\pi}{2}$.
\end{enumerate}
\end{theorem}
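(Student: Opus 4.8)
The plan is to derive all four parts from two observations about the configuration at a fixed $\theta>0$: that $\widehat{BP}=\phi$, and that the spherical triangle $PQT$ has a right angle at $Q$. Since we work on the unit sphere, $\widehat{BP}=\angle POB=\phi$ and $\widehat{BQ}=\angle AOB=\pi/2$; as $P$ lies on the meridian great circle through $B$ and $Q$, this gives $\widehat{PQ}=\pi/2-\phi$. Because a meridian meets the equator orthogonally at their common point $Q$, we have $\angle PQT=\pi/2$, so $PQT$ is a right spherical triangle with legs $\widehat{PQ}$, $\widehat{QT}$ and hypotenuse $\widehat{PT}$.

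The key step is to compute the spherical angle $\angle QPT$, i.e. the angle at $P$ between the meridian and the tangent great circle. From the defining relation (6) the curve has unit speed, and from the ODE $\sin^2\phi+(\phi')^2=1$ with $\phi$ increasing on $[0,\pi/2)$ we get $\phi'=\cos\phi$. Hence, measuring along the curve by arclength $s=\theta$, the component of motion toward the equator is $d\phi/ds=\cos\phi$ and the eastward component is $\sin\phi\,d\theta/ds=\sin\phi$, so the forward tangent makes angle $\phi$ with the direction $P\to Q$ (the direction of increasing $\phi$ along the meridian). Equivalently, writing $P(\theta)=(\cos\theta\sin\phi,\sin\theta\sin\phi,\cos\phi)$, a short computation using $\phi'=\cos\phi$ gives $P'_\theta\cdot\partial_\phi P=\cos\phi$ while $|P'_\theta|=|\partial_\phi P|=1$. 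Since the $z$-component of $P'_\theta$ is negative, the forward tangent descends toward $z=0$, so the intersection $T$ of the tangent great circle with the equator shown in Figure 3 is the one with $\angle QPT=\phi=\widehat{BP}$; this is part (3). Part (4) is then immediate: $B$, $P$, $Q$ lie on one great circle, so $\angle BPT=\pi-\angle QPT=\pi-\phi=(\pi/2-\phi)+\pi/2=\widehat{PQ}+\pi/2$.

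For part (1) I would apply Napier's rules to the right triangle $PQT$ (right angle at $Q$, leg $\widehat{PQ}=\pi/2-\phi$, angle $\angle QPT=\phi$ at $P$): $\tan\widehat{QT}=\sin(\widehat{PQ})\tan(\angle QPT)=\cos\phi\tan\phi=\sin\phi$ and $\tan\widehat{PT}=\tan(\widehat{PQ})/\cos(\angle QPT)=\cot\phi/\cos\phi=\csc\phi$. These two tangents are reciprocal, and both arcs lie in $(0,\pi/2)$, so $\widehat{PT}=\pi/2-\widehat{QT}$, i.e. $\widehat{PT}+\widehat{TQ}=\pi/2$, independently of $\theta$. (Alternatively, and more in the spirit of an \emph{analytic} proof, one can write the normal of the tangent plane as $P\times P'_\theta$, solve for $T$ on $z=0$ explicitly, and verify $\cos\widehat{PT}=\sin\widehat{QT}$.) For part (2), $\theta\mapsto\phi=2\tan^{-1}e^{\theta}-\pi/2$ is strictly increasing from $0$ to $\pi/2$, so $\widehat{TQ}=\arctan(\sin\phi)$ strictly increases to $\arctan 1=\pi/4$, while $\widehat{PT}=\pi/2-\widehat{TQ}$ strictly decreases to $\pi/4$.

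I expect the main obstacle to be part (3): besides computing that the tangent and the meridian meet at angle $\phi$, one must pin down the \emph{orientation} — the correct forward tangent direction and the correct one of the two antipodal points in which the tangent great circle meets the equator — so as to be sure the vertex $T$ of Figure 3 carries the angle $\phi$ rather than its supplement $\pi-\phi$. Once (3) is secured, (4) is a one-line consequence, and (1)–(2) are routine spherical trigonometry.
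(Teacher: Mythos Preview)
Your argument is correct, and it takes a genuinely different route from the paper's own proof. The paper first locates $T$ explicitly: it writes the tangent vector $\vec v=P'_\theta$, forms the normal $\vec n_1=\vec v\times\vec r$ of the tangent plane, intersects with $z=0$, and obtains the longitude $\Theta=\theta+\tan^{-1}\tanh\theta$, hence $\widehat{QT}=\tan^{-1}\tanh\theta$. From this and the spherical Pythagorean theorem it gets $\tan\widehat{PT}=\coth\theta$, proving (1)--(2), and only then applies a Napier rule in $\triangle PQT$ to read off $\angle QPT=\phi$ and (4). Your approach inverts this order: you extract $\angle QPT=\phi$ directly from the unit-speed decomposition $(\phi',\sin\phi\cdot\theta')=(\cos\phi,\sin\phi)$ of the tangent, then feed this angle and the leg $\widehat{PQ}=\tfrac{\pi}{2}-\phi$ into Napier's rules to obtain $\tan\widehat{QT}=\sin\phi$ and $\tan\widehat{PT}=\csc\phi$ (which, via $\sin\phi=\tanh\theta$, are exactly the paper's formulas). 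Your ``alternative'' in the parenthetical is precisely the paper's primary method.

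What each buys: the paper's explicit computation of $T$ yields $\theta$-formulas immediately and removes any ambiguity about which of the two antipodal equator intersections is meant, at the cost of a heavier coordinate calculation. Your route is shorter and more conceptual --- it makes clear that (3) is the heart of the matter and (1), (2), (4) are spherical-trig corollaries --- but, as you yourself flag, it leans on the figure to pick the correct $T$. Your justification (the forward tangent has negative $z$-component, hence descends to the near equatorial crossing, which is the $T$ of Figure~3) is adequate; if you want to make it fully self-contained you could note that, since $\angle QPT=\phi\in(0,\tfrac{\pi}{2})$, the Napier computation forces $\widehat{QT}\in(0,\tfrac{\pi}{2})$, singling out the near intersection.
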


\begin{proof}
We continue to use the notations introduced in Question 2 and its answer. Since $\phi'(\theta)=\cos{\phi}$, we can write for tangent vector $\vec{v}=(x'_\theta,y'_\theta,z'_\theta)$ of the curve (7) the following
$$
\vec{v}=(-\sin{\theta}\sin{\phi} +\cos{\theta}\cos^2{\phi},\cos{\theta}\sin{\phi}+\sin{\theta}\cos^2{\phi},-\sin{\phi}\cos{\phi}).
$$
We can find the normal vector of the plane containing the great circle through $P$ and $T$, as $\vec{n_1}=\vec{v}\times \vec{r}$, where $\vec{r}=(x,y,z)$:
$$
\vec{n_1}=(\cos{\theta}\sin{\phi}\cos{\phi}+\sin{\theta}\cos{\phi},\sin{\theta}\sin{\phi}\cos{\phi}-\cos{\theta}\cos{\phi},-\sin^2{\phi}).
$$
Then the equation of this plane is
$$
(\cos{\theta}\sin{\phi}\cos{\phi}+\sin{\theta}\cos{\phi})x+(\sin{\theta}\sin{\phi}\cos{\phi}-\cos{\theta}\cos{\phi})y-(\sin^2{\phi})z=0.
$$
By putting $z=0$ in this equation, we can find the only unknown coordinate $\Theta$ of the point $T(1,\Theta,\frac{\pi}{2})$ in spherical coordinates:
$$
\tan{\Theta}=\frac{y}{x}=\frac{\cos{\theta}\sin{\phi}\cos{\phi}+\sin{\theta}\cos{\phi}}{-\sin{\theta}\sin{\phi}\cos{\phi}+\cos{\theta}\cos{\phi}}=\frac{\tan{\theta}+\sin{\phi}}{1-\tan{\theta}\sin{\phi}}.
$$
By (7), $\sin{\phi}=\tanh{\theta}$. Therefore, $$\tan{\Theta}=\tan{(\theta+\tan^{-1}{\tanh{\theta}})},$$ which means that $\Theta=\theta+\tan^{-1}{\tanh{\theta}}$ and consequently $\widehat{QT}=\tan^{-1}{\tanh{\theta}}$. Using Spherical Pythagorean Theorem (\cite{whittlesey}, page 112) for $\triangle PQT$, we obtain $$\cos{\widehat{PT}}=\cos{\widehat{PQ}}\cos{\widehat{QT}}=\sin{\phi}\cos{\tan^{-1}{\tanh{\theta}}}=\tanh{\theta}\frac{\cosh{\theta}}{\cosh{2\theta}}=\frac{\sinh{\theta}}{\cosh{2\theta}}.$$
Therefore, $\tan{\widehat{PT}}=\coth{\theta}$, and consequently $\tan{\widehat{PT}}\tan{\widehat{QT}}=\coth{\theta}\tanh{\theta}=1$. Since $\widehat{PT},\widehat{TQ}<\frac{\pi}{2}$, this proves that $\widehat{PT}+ \widehat{TQ}=\frac{\pi}{2}$. Using the formula $\widehat{PT}=\tan^{-1}{\coth{\theta}}$ and $\widehat{QT}=\tan^{-1}{\tanh{\theta}}$ we can now show that as $\theta$ increases, $ \widehat{TQ}$ increases, but $\widehat{PT}$ decreases, and both approach to $\frac{\pi}{4}$. Using the formula for a right angled spherical triange (\cite{whittlesey}, page 127, 4.35), we obtain
$$
\tan{\angle QPT}=\frac{\tan{\widehat{QT}}}{\sin{\widehat{PT}}}=\frac{\tanh{\theta}}{\sin{\left(\frac{\pi}{2}-\phi\right)}}=\frac{\sin{\phi}}{\cos{\phi}}=\tan{\phi}.
$$
Therefore, $\angle QPT=\widehat{BP}$. Similarly, $\angle BPT=\widehat{PQ}+\frac{\pi}{2}$.
\qedhere
\end{proof}
Using similar analytic methods, one can prove the following property of the curve (7) for tangent small circles. It is also possible to prove this theorem using Lemma 2.
\begin{theorem}
If a small circle of the unit sphere passes through point $B$ and is tangent to the curve (7) at point $P$ (see Figure \ref{fig3}), then its spherical radius $R$ satisfies $\tan^2{R}=\frac{1}{4}\sec^4{\frac{1}{2}\widehat{BP}}$.
\end{theorem}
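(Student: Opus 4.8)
The plan is to imitate the proof of Theorem 2, but working directly with the unit sphere in $\mathbb{R}^3$ and reusing the vectors from the proof of Theorem 5. Write $\phi=\widehat{BP}$ and recall $\vec r=(x,y,z)=(\cos\theta\sin\phi,\sin\theta\sin\phi,\cos\phi)$, the curve's tangent vector $\vec v$, and the normal $\vec n_1=\vec v\times\vec r$ of the tangent great circle at $P$. Since the curve lies on the unit sphere, $\vec v\perp\vec r$ and a short computation gives $|\vec v|=1$, hence $\vec n_1$ is a unit vector orthogonal to $\vec r$. The small circle of the theorem is a plane section of the sphere; its spherical centre $N$ is a pole of the cutting plane, and tangency at $P$ to the great circle with normal $\vec n_1$ forces the radius arc $NP$ to be perpendicular to $\vec v$, i.e.\ $N\cdot\vec v=0$. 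Therefore $N$ lies on the great circle through $\vec r$ and $\vec n_1$, and I can write $N=\cos t\,\vec r+\sin t\,\vec n_1$ with $R=\widehat{NP}=|t|$ (choosing the pole with $|t|\le\pi/2$).

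Next I would impose that $N$ is spherically equidistant from $B$ and $P$: $\cos\widehat{NB}=N\cdot B=\cos t$. From the explicit formula for $\vec n_1$ in the proof of Theorem 5 one reads off $\vec r\cdot B=\cos\phi$ and $\vec n_1\cdot B=-\sin^2\phi$, so the condition becomes $\cos t\cos\phi-\sin t\sin^2\phi=\cos t$, i.e.\ $\tan t=(\cos\phi-1)/\sin^2\phi$. Hence $\tan R=|\tan t|=\dfrac{1-\cos\phi}{\sin^2\phi}=\dfrac{2\sin^2\frac\phi2}{4\sin^2\frac\phi2\cos^2\frac\phi2}=\dfrac12\sec^2\dfrac\phi2$, and squaring together with $\phi=\widehat{BP}$ yields $\tan^2R=\frac14\sec^4\frac12\widehat{BP}$.

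For the alternative proof via Lemma 2 I would first record the spherical tangent–chord relation: if a small circle of spherical radius $R$ passes through $P$ and $X$ and its tangent great circle at $P$ meets the great circle $PX$ at angle $\alpha$, then $\tan\frac{\widehat{PX}}2=\tan R\sin\alpha$ (drop the perpendicular from the centre onto $PX$, landing at the midpoint of the arc, and apply Napier's rules in the resulting right spherical triangle, whose base angle is $\frac\pi2-\alpha$). Apply this with $X=B$, so $\widehat{PX}=\phi$. Because $B$, $P$, $Q$ all lie on one great circle, the angle $\alpha$ between the curve's tangent great circle at $P$ (the one through $T$) and the great circle $PB$ coincides with $\angle QPT$, which equals $\widehat{BP}=\phi$ by part (3) of Theorem 5. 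Thus $\tan R=\dfrac{\tan(\phi/2)}{\sin\phi}=\dfrac12\sec^2\dfrac\phi2$, recovering the same answer.

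I expect the main work to be purely computational and short, since the key quantities collapse ($|\vec v|=1$, $\vec n_1\cdot B=-\sin^2\phi$, $1-\cos\phi=2\sin^2\frac\phi2$). The only points needing care are conceptual rather than hard: justifying that the spherical centre lies on the great circle spanned by $\vec r$ and $\vec n_1$ and picking the correct pole so that $R\in(0,\pi/2)$, and — in the Lemma 2 route — setting up the auxiliary right spherical triangle and identifying its base angle with $\frac\pi2-\alpha$. Neither constitutes a genuine obstacle.
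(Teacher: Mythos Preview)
Your proposal is correct. The paper does not spell out a proof of Theorem~6: it only remarks that one can use ``similar analytic methods'' to those of Theorem~5, and that ``it is also possible to prove this theorem using Lemma~2''. Your first argument is exactly such an analytic proof, and indeed the natural spherical analogue of the paper's proof of Theorem~2: you locate the spherical centre $N$ of the tangent small circle in the orthonormal frame $\{\vec r,\vec n_1\}$ supplied by the proof of Theorem~5, impose $N\cdot B=N\cdot\vec r$, and read off $\tan R=\tfrac12\sec^2\tfrac{\phi}{2}$. The computations ($|\vec v|=1$, $\vec n_1\cdot B=-\sin^2\phi$, and the half-angle simplification) are all correct.

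Your second argument, however, is \emph{not} the paper's Lemma~2 route, even though you label it that way. In the paper, Lemma~2 is a circumradius formula for the finite spherical triangle $\triangle BP_1P_2$ in the configuration of Lemma~1; Theorem~6 is obtained from it in the limit $P_1\to P_2$, just as Theorem~2 arises from Lemma~3(2) in the planar case. What you do instead is prove a spherical tangent--chord relation $\tan\tfrac{\widehat{PX}}{2}=\tan R\,\sin\alpha$ from Napier's rules and then invoke Theorem~5(3) to supply $\alpha$. This is perfectly valid (and arguably cleaner, since it works with the tangent small circle directly rather than with a circumscribed circle in the limit), but it is a genuinely different decomposition from the paper's. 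One small point of care: the angle at $P$ between the tangent great circle $PT$ and the meridian $BPQ$ is, strictly speaking, $\angle BPT$ on the $B$-side, which by Theorem~5(4) equals $\widehat{PQ}+\tfrac{\pi}{2}$, not $\angle QPT=\phi$; but since these two angles are supplementary, $\sin\alpha=\sin\phi$ either way, and your conclusion stands.
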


We will now discuss the mentioned connection with the Mercator projection. Let us do a projection of the sphere onto the cylinder $x^2+y^2=1$ using the formula $x=\theta$ and $y=\ln{\tan{\left(\frac{\pi}{4}+\frac{\varphi}{2}\right)}}$ (see
\cite{pearson}, p. 191). Note that here $x$ and $y$ are new coordinates over the cylinder \cite{weis}. Note also the difference between the spherical coordinate $\phi$ and latitude $\varphi =\frac{\pi}{2}-\phi$. Using (7) and the last two equalities, we can write 
$$y=\ln{\left(\frac{\pi}{4}+\frac{\frac{\pi}{2}-\left(2\tan^{-1}{e^\theta}-\frac{\pi}{2} \right)}{2}\right)}=\ln{\coth{\frac{\theta}{2}}}.$$
Since $x=\theta$ on the cylinder, we determine that Mercator projection of the curve (7) is $y=\ln{\coth{\frac{x}{2}}}$, which is an inverse function of itself. This means that after the application of Mercator projection, the curve (7) has a \textit{symmetry} with respect to the line $y=x$ (See Figure~\ref{fig4}). Note that $y=x$ is a Helix on the cylinder, and it is the projection of a Spherical Spiral (a special case of Loxodrom or Rhumb Line \cite{weis2}) intersecting the meridians $\phi=\textrm{const}$ of the sphere at constant angle $\frac{\pi}{4}$. Since Mercator projection is conformal (angle-preserving), the slope $y'=-\frac{1}{\sinh{x}}$ on the cylinder and $\cot{\angle BPT}$ on the sphere should be equal. Indeed, by Theorem 5, $$\cot{\angle BPT}=\cot{\left(\pi-\phi \right)}=-\cot{\left(\phi \right)}$$ $$=-\cot{\left( 2\tan^{-1}{e^\theta}-\frac{\pi}{2}\right)}=\tan{\left( 2\tan^{-1}{e^\theta}\right)}=\frac{2e^\theta}{1-e^{2\theta}}=-\frac{1}{\sinh{\theta}}.$$ 
\vspace{-12pt}
\begin{figure}
{\includegraphics[scale=.5]{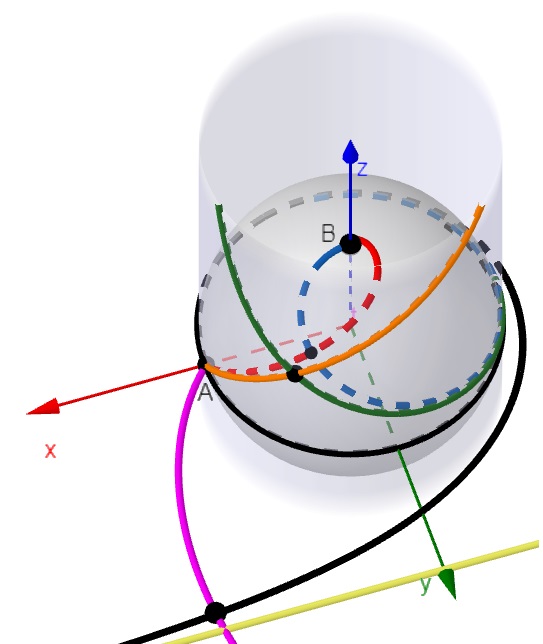}}
\caption{Mercator and Stereographic projections of the Spherical Interception Curve (blue) and Spherical Spiral (red). The other colors are explained in the text and in Table 1.}
\label{fig4}
\end{figure}

Note that in the literature the expression \textit{Spherical Spiral} is sometimes used for different curves. In Figure \ref{fig4}, we used parametrization $x=\frac{\cos{\theta}}{\cosh{\theta}}, y=\frac{\sin{\theta}}{\cosh{\theta}}, z=\tanh{\theta}$ for this curve (red curve) (\cite{gray}, Lemma 8.27), the parametrization $x=\cos{\theta}, y=\sin{\theta}, z=\theta$ for $y=x$ (orange curve), which is Mercator projection of the previous curve onto the cylinder, the parametrization $x=\cos{\theta}, y=\sin{\theta}, z=\ln{\coth{\frac{\theta}{2}}}$ for $y=\ln{\coth{\frac{x}{2}}}$ (green curve), which is Mercator projection of the interception curve (7) (blue curve). Note also that in Figure  \ref{fig4}, Stereographic projection of (7) onto $xy$ plane, with respect to the pole $B$, is a plane spiral curve (black curve), which in polar coordinates can be written as $r=\coth{\frac{\theta}{2}}$. Indeed, by (7),
$$r=\frac{1}{\tan{\frac{\phi}{2}}}=\frac{1}{\tan{\left(\tan^{-1}{e^\theta}-\frac{\pi}{4} \right)}}=\coth{\frac{\theta}{2}}.$$
The line $y=2$ (yellow line) on the $xy$ plane is the asymptote of this curve. It is known that the Stereographic projection of the rhumb line is a logarithmic spiral $r=e^{\theta}$ (purple curve) (\cite{gray}, Lemma 8.27). In particular, since the curves $y=x$ and $y=\ln{\coth{\frac{x}{2}}}$ intersect at a right angle, and Stereographic and Mercator projections are both conformal, the curves on the sphere, on the cylinder, and the plane in Figure \ref{fig4}, intersect at right angles at the indicated points. See Table 1 for the list of corresponding curves in Figure \ref{fig4}.

\begin{table}
\caption{Corresponding curves on the sphere, on the cylinder and on the plane in Figure \ref{fig4}.\label{Table 2}}

\begin{tabular}{|c|c|c|}
 \hline
\textbf{Sphere \boldmath{$x^2+y^2+z^2=1$}}	\\
 \hline
$\phi=2\tan^{-1}{e^\theta}-\frac{\pi}{2}$ Interception Curve (blue)\\
 \hline
$\left(\frac{\cos{\theta}}{\cosh{\theta}}, \frac{\sin{\theta}}{\cosh{\theta}}, \tanh{\theta}\right)$\\
 \hline
 \hline
 \textbf{Cylinder \boldmath{$x^2+y^2=1$}}	\  \textbf{(Mercator Projection)}\\
 \hline
$y=\ln{\coth{\frac{x}{2}}}$\newline (green)\\
 \hline$y=x$ \newline Helix (orange)\\
 \hline
 \hline
 \textbf{Plane \boldmath{$z=0$}} \  \textbf{(Stereographic Projection)}\\
 \hline
 $r=\coth{\frac{\theta}{2}}$ \newline (black)\\
 \hline
$r=e^{\theta}$ \newline Logarithmic Spiral (purple)\\
 \hline

\end{tabular}
\end{table}

\textbf{4. Alternative Methods for Spherical and Planar Curves}\label{sec4}

We can prove Theorem 5 also using the following lemma, which is interesting on its~own.
\begin{lemma} On a unit sphere with center $O$, a great circle and one of its poles $B$ is drawn. Two great circles through $B$ intersect the first great circle at $Q_1$ and $Q_2$, so that $\widehat{Q_1Q_2}<\frac{\pi}{2}$. Another great circle intersects arcs $BQ_1$ and $BQ_2$ at points $P_1$ and $P_2$ so that $\widehat{P_1P_2}=\widehat{Q_1Q_2}$. This great circle also intersects the first great circle at $T$ (See Figure \ref{fig5}). Then $$\widehat{P_1T}+ \widehat{TQ_2}=\widehat{P_2T}+ \widehat{TQ_1}=\frac{\pi}{2}.$$
\end{lemma}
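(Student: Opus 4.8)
The plan is to prove the lemma by a direct computation in Cartesian coordinates of the ambient $\mathbb{R}^3$, in the same spirit as the proofs of Theorems 5 and 6, and then to obtain the second of the two displayed equalities from the first almost for free.

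First I would normalize the configuration: set $B=(0,0,1)$, let the given great circle be the equator $z=0$, and use the rotational freedom about the $z$-axis to place the two great circles through $B$ at longitudes $-\alpha$ and $\alpha$, where $2\alpha=\widehat{Q_1Q_2}\in(0,\tfrac{\pi}{2})$. Then $Q_1=(\cos\alpha,-\sin\alpha,0)$, $Q_2=(\cos\alpha,\sin\alpha,0)$, and, writing $\phi_i=\widehat{BP_i}\in(0,\tfrac{\pi}{2})$ (with $\phi_1<\phi_2$ after relabelling; the case $\phi_1=\phi_2$ is degenerate), $P_1=(\cos\alpha\sin\phi_1,-\sin\alpha\sin\phi_1,\cos\phi_1)$ and $P_2=(\cos\alpha\sin\phi_2,\sin\alpha\sin\phi_2,\cos\phi_2)$. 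Evaluating $P_1\cdot P_2=\cos\widehat{P_1P_2}$, the hypothesis $\widehat{P_1P_2}=\widehat{Q_1Q_2}=2\alpha$ turns into the single relation
\[
\cos 2\alpha=\frac{\cos\phi_1\cos\phi_2}{1-\sin\phi_1\sin\phi_2},\qquad\text{and hence}\qquad \sin 2\alpha=\frac{\sin\phi_2-\sin\phi_1}{1-\sin\phi_1\sin\phi_2}
\]
(the denominator being positive since $\phi_i<\tfrac{\pi}{2}$). These two identities will do all the work.

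Next I would locate $T$: the great circle through $P_1,P_2$ has normal $\vec n=P_1\times P_2$, which a short cross-product computation identifies as $\vec n=\bigl(-\sin\alpha\sin(\phi_1+\phi_2),\,\cos\alpha\sin(\phi_2-\phi_1),\,\sin 2\alpha\,\sin\phi_1\sin\phi_2\bigr)$, so intersecting with $z=0$ gives $T=(\cos\psi,\sin\psi,0)$ with $\tan\psi=\dfrac{\sin\alpha\sin(\phi_1+\phi_2)}{\cos\alpha\sin(\phi_2-\phi_1)}$ (the correct one of the two antipodal intersection points). Then $\cos\widehat{P_1T}=P_1\cdot T=\sin\phi_1\cos(\psi+\alpha)$ and $\cos\widehat{TQ_2}=T\cdot Q_2=\cos(\psi-\alpha)$. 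Expanding $\cos(\psi+\alpha)$ and $\sin(\psi-\alpha)$ with the value of $\tan\psi$, using $\sin(\phi_1+\phi_2)-\sin(\phi_2-\phi_1)=2\sin\phi_1\cos\phi_2$, and finally inserting the two identities above for $\cos 2\alpha$ and $\sin 2\alpha$, both $\cos\widehat{P_1T}$ and $\sin\widehat{TQ_2}=\lvert\sin(\psi-\alpha)\rvert$ collapse to the \emph{same} expression
\[
\frac{\sin\phi_1\cos\phi_2\,(\sin\phi_2-\sin\phi_1)}{(1-\sin\phi_1\sin\phi_2)\sqrt{\sin^2\alpha\,\sin^2(\phi_1+\phi_2)+\cos^2\alpha\,\sin^2(\phi_2-\phi_1)}}.
\]
Hence $\cos\widehat{P_1T}=\sin\widehat{TQ_2}$, and since one checks (from $\phi_1<\phi_2<\tfrac{\pi}{2}$ and $\lvert\psi-\alpha\rvert<\tfrac{\pi}{2}$) that $\widehat{P_1T},\widehat{TQ_2}\in(0,\tfrac{\pi}{2})$, this forces $\widehat{P_1T}+\widehat{TQ_2}=\tfrac{\pi}{2}$. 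The equality $\widehat{P_2T}+\widehat{TQ_1}=\tfrac{\pi}{2}$ then follows either from the involution $(\alpha,\phi_1,\phi_2)\mapsto(-\alpha,\phi_2,\phi_1)$, which swaps $Q_1\leftrightarrow Q_2$ and $P_1\leftrightarrow P_2$ while fixing $T$, or directly: the arc orderings along the equator and along the great circle, together with $\widehat{P_1P_2}=\widehat{Q_1Q_2}$, show that $\widehat{P_1T}+\widehat{TQ_2}$ and $\widehat{P_2T}+\widehat{TQ_1}$ differ by $\pm(\widehat{Q_1Q_2}-\widehat{P_1P_2})=0$, hence coincide.

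I expect the only real obstacle to be bookkeeping rather than ideas: choosing the right one of the two antipodal candidates for $T$, handling the absolute value $\lvert\sin(\psi-\alpha)\rvert$ with the correct sign, and verifying that $\widehat{P_1T}$ and $\widehat{TQ_2}$ genuinely lie in $(0,\tfrac{\pi}{2})$ — this last point is exactly where the hypotheses $\widehat{Q_1Q_2}<\tfrac{\pi}{2}$ and "$P_i$ lies strictly between $B$ and $Q_i$" get used. As a secondary remark, the Cartesian computation can be replaced by a synthetic one: because each meridian meets the equator at a right angle, $\triangle TQ_1P_1$ and $\triangle TQ_2P_2$ are right-angled spherical triangles (right angle at $Q_i$, with $\widehat{Q_iP_i}=\tfrac{\pi}{2}-\phi_i$), so Napier's rules applied to these two triangles together with the relation between $\alpha$ and $\phi_1,\phi_2$ deliver the same conclusion; I would present the analytic proof as the main one and mention this alternative, matching the paper's treatment of Theorems 5 and 6.
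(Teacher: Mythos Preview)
Your coordinate computation is correct: the cross--product normal, the formula for $\tan\psi$, and the reduction of both $\cos\widehat{P_1T}$ and $\sin(\psi-\alpha)$ to the common expression you display all check out, and the ``direct'' deduction of the second equality from the first via $\widehat{P_1T}=\widehat{P_1P_2}+\widehat{P_2T}$ and $\widehat{TQ_1}=\widehat{TQ_2}+\widehat{Q_1Q_2}$ is valid (the involution argument you offer as an alternative is slightly awkward because it reverses the WLOG inequality $\phi_1<\phi_2$, so I would keep only the direct one).

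The paper, however, argues quite differently. Its proof is entirely synthetic: setting $\widehat{P_1Q_1}=a$, $\widehat{P_2Q_2}=b$, it first uses the spherical sine theorem in $\triangle BP_1P_2$ to show $\angle BP_1P_2=\tfrac{\pi}{2}+b$ and $\angle BP_2P_1=\tfrac{\pi}{2}-a$; then it introduces an auxiliary point $P_3$ on arc $P_1Q_1$ with $\widehat{P_2P_3}=\widehat{P_1P_2}$, lets $E$ be the midpoint of $\widehat{P_1P_3}$ so that $P_2E\perp BQ_1$, and finally shows that the spherical triangle $P_2TD$ (with $D$ the foot of $P_2E$ on the equator) is isosceles, whence $\widehat{P_2T}=\widehat{TD}$ and the sum $\widehat{P_1T}+\widehat{TQ_2}$ collapses to a quarter great circle. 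Your analytic route has the advantage of being mechanical and needing no auxiliary constructions; the paper's synthetic route has the advantage that the intermediate angle identities $\angle BP_iP_j=\tfrac{\pi}{2}\pm(\cdot)$ are exactly what is later passed to the limit to obtain parts~(3) and~(4) of Theorem~5, so that proof does double duty. Your closing remark about Napier's rules in the right triangles $\triangle TQ_iP_i$ is closer in spirit to the paper's method than your main argument, though still not the same construction.
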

\vspace{-12pt}
\begin{figure}
{\includegraphics[scale=0.6]{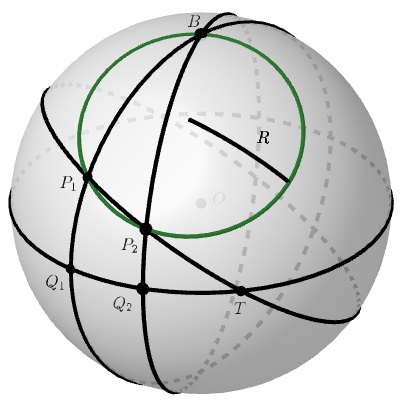}}
\caption{Lemma 1 for Spherical Case. The green small circle will be used in Lemma 2.\label{fig5}}
\end{figure}
\begin{proof} Let us denote $\widehat{P_1P_2}=\widehat{Q_1Q_2}=x$. Since angles at $Q_1$ and $Q_2$ are right angles, by spherical sine theorem for $\triangle BQ_1Q_2$, the angle between the two great circles at vertex $B$ is also $x$ (\cite{whittlesey}, page 30). Denote also $\widehat{P_1Q_1}=a$, $\widehat{P_2Q_2}=b$ (See Figure \ref{fig6}). Then $\widehat{BP_1}=\frac{\pi}{2}-a$ and $\widehat{BP_2}=\frac{\pi}{2}-b$. By spherical sine theorem for $\triangle BP_1P_2$ (\cite{whittlesey}, page 115),
$$
\frac{\sin{B}}{\sin{\widehat{P_1P_2}}}=\frac{\sin{P_1}}{\sin{\widehat{BP_2}}}=\frac{\sin{P_2}}{\sin{\widehat{BP_1}}}.
$$
Since $\sin{B}=\sin{\widehat{P_1P_2}}=\sin{x}$,
\begin{equation}
\sin{\angle BP_1P_2}=\sin{\widehat{BP_2}}=\cos{b}, \sin{\angle BP_2P_1}\sin{\widehat{BP_1}}=\cos{a}.
\end{equation}
Take another point $P_3$ on the arc $P_1Q_1$, such that $\widehat{P_2P_3}=x$. Such a point $P_3$ should exist, because $\widehat{P_2Q_1}>x$. Denote the midpoint of $\widehat{P_1P_3}$ by $E$ and let $\widehat{Q_1P_3}=c$. Then $\widehat{P_1E}=\widehat{EP_3}=0.5(a-c)$. Since spherical $\triangle P_1P_2P_3$ is isosceles, $P_2E$ is its angle bisector and $P_2E$ is perpendicular to $P_1P_3$. The three angles of the quadrilateral $EQ_1Q_2P_2$ are right angles. Therefore, the fourth $\angle EP_2Q_2$ is obtuse and $\angle EP_2B$ is acute. Since $\angle P_1P_2B<\angle EP_2B$, $\angle P_1P_2B$ is acute, too. Similarly, $\angle BP_3P_2=\angle P_3P_1P_2$ is acute. Consequently, $\angle BP_1P_2$ is obtuse. So, $\angle BP_1P_2$ and $\angle BP_2P_3$ are obtuse, $\angle BP_2P_1$ and $\angle BP_3P_2$ are acute. Therefore, by (8), $\angle BP_1P_2=\frac{\pi}{2}+b$ and $\angle BP_2P_1=\frac{\pi}{2}-a$. Similarly,  $\angle BP_2P_3=\frac{\pi}{2}+c$ and $\angle BP_3P_2=\frac{\pi}{2}-b$. So, $\angle PP_2P_3=\angle BP_2P_3-\angle BP_2P_1=c+a$. Therefore, $\angle P_1P_2E=\frac{c+a}{2}$. 
Denote the intersection of the great circles $EP_2$ and $Q_1Q_2$ by $D$. Denote also $EP_2=w$, $P_2T=z$, $Q_2T=y$, $TD=z_1$. Then $P_2D=\frac{\pi}{2}-w$ and $x+y+z_1=\frac{\pi}{2}$. It remains only to show that $z=z_1$. This follows from the fact that spherical $\triangle P_2TD$ is isosceles. Indeed,  $\angle D=\widehat{Q_1E}=\frac{a+c}{2}$ and $\angle TP_2D=\angle P_1P_2E=\frac{a+c}{2}$. Consequently, $\angle D=\angle TP_2D$ or $z=z_1$. Finally
$$\widehat{P_1T}+ \widehat{TQ_2}=\widehat{P_2T}+ \widehat{TQ_1}=x+y+z=\frac{\pi}{2}.$$
\qedhere
\end{proof}
\vspace{-12pt}
\begin{figure}
{\includegraphics[scale=.9]{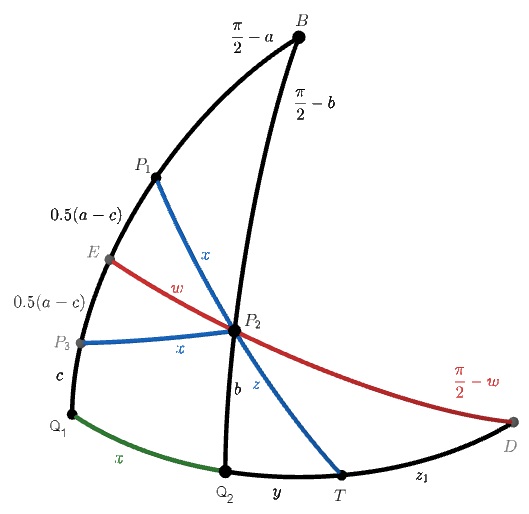}}
\caption{Proof of Lemma 1.\label{fig6}}
\end{figure}

For the proof of Theorem 5, one can observe that infinitesimal line element or length element of the curve $\phi=2\tan^{-1}{e^\theta}-\frac{\pi}{2}$ satisfies the conditions of Lemma 1
 and, therefore, one just needs to pass to limit $x\rightarrow 0$ ($Q_1$ approaches $Q_2$) which implies $\widehat{PT}+ \widehat{TQ}=y+z=\frac{\pi}{2}$. Since $\lim_{\theta \rightarrow \infty} |PQ|=0$, $y$ and $z$ both approach $\frac{\pi}{4}$. Passing to limit $x\rightarrow 0$,  also implies $a\rightarrow b$. Therefore,  $\angle QPT=\lim_{x \rightarrow 0} \angle P_1P_2B=\lim_{x \rightarrow 0} \left(\frac{\pi}{2}-a\right)=\frac{\pi}{2}-b=\widehat{BP}$. Similarly, $\angle BPT=\lim_{x \rightarrow 0} \angle BP_2T=\lim_{x \rightarrow 0}  \left(\pi-\angle P_1P_2B\right)=\lim_{x \rightarrow 0}  \left(\frac{\pi}{2}+a\right)=\frac{\pi}{2}+b=\widehat{PQ}+\frac{\pi}{2}$.

We can prove Theorem 6 using the following lemma, which is a direct consequence of the formula for the radius of the circumscribed circle of a spherical triangle (see, e.g., \cite{odani}, Formula (4)).

\begin{lemma} Under conditions of Lemma 1, let a small circle of the unit sphere be circumscribed about $\triangle BP_1P_2$ (See Figure \ref{fig5}). If radius of the small circle is $R$, then $$\tan^2{R}=\frac{1}{4}\sec^4{\frac{1}{2}\widehat{P_1P_2}}\sec^4{\frac{1}{2}\widehat{BP_1}}\sec^4{\frac{1}{2}\widehat{BP_2}}.$$
\end{lemma}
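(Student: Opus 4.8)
The plan is to obtain the formula by specializing the general spherical circumradius formula (Odani, Formula~(4)) to $\triangle BP_1P_2$, the single nontrivial input being a relation inherited from the configuration of Lemma~1. Since $B$ is a pole of the equatorial great circle and the arcs $BQ_1$, $BQ_2$ are meridians meeting that circle at right angles, the angle of $\triangle BP_1P_2$ at $B$ equals the equatorial arc it subtends, namely $\widehat{Q_1Q_2}$; combined with the hypothesis $\widehat{P_1P_2}=\widehat{Q_1Q_2}$ this gives the self-polar relation $\angle P_1BP_2=\widehat{P_1P_2}$. Establishing this identity cleanly is the first step, because it is exactly what lets the general formula collapse to a bare product of secants.

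Next I would set up the circumradius. Writing $B,P_1,P_2$ as unit vectors, the circumscribed small circle lies in the plane through these three points, and its angular radius satisfies $\cos R=\operatorname{dist}(O,\text{plane }BP_1P_2)$. Computing the distance as the ratio of (three times) the tetrahedron volume to the triangle area gives
\[
\tan R=\frac{\lvert BP_1\rvert\,\lvert P_1P_2\rvert\,\lvert P_2B\rvert}{2\,\bigl\lvert\det[B,P_1,P_2]\bigr\rvert},
\]
which is the Euclidean form underlying Odani's Formula~(4). Here the chord lengths are $\lvert BP_1\rvert=2\sin\tfrac12\widehat{BP_1}$, and similarly for the other two, while the determinant evaluates, with $B$ at the pole, to $\det[B,P_1,P_2]=\sin\widehat{BP_1}\,\sin\widehat{BP_2}\,\sin(\angle P_1BP_2)$; equivalently $\det[B,P_1,P_2]^2$ is the Gram determinant $1-\cos^2a-\cos^2b-\cos^2c+2\cos a\cos b\cos c$. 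I would derive these two ingredients in the general (unconstrained) setting first.

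The reduction then proceeds by the half-angle identities $\sin\alpha=2\sin\tfrac\alpha2\cos\tfrac\alpha2$. Substituting the chord lengths into the numerator and this factorization of each sine into the denominator, and then invoking the relation $\angle P_1BP_2=\widehat{P_1P_2}$ from the first step to rewrite $\sin(\angle P_1BP_2)$, every factor $\sin\tfrac{\cdot}{2}$ in the numerator cancels against its counterpart in the denominator, leaving only cosines of the half-sides. Collecting these as secants, with the numerical factor $\tfrac14$ in front, produces a closed form in the three half-arcs $\tfrac12\widehat{P_1P_2}$, $\tfrac12\widehat{BP_1}$, $\tfrac12\widehat{BP_2}$ of the kind asserted in the statement.

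The main obstacle is precisely the bookkeeping of these half-angle powers: the cancellation must be tracked factor by factor, and the exact power to which each secant survives is the delicate point of the whole argument. To settle it unambiguously I would use the tangent-circle degeneration $P_1,P_2\to P$ (so $\widehat{P_1P_2}\to0$ and $\widehat{BP_1},\widehat{BP_2}\to\widehat{BP}$) and match the limit of $\tan^2R$ against the value $\tfrac14\sec^4\tfrac12\widehat{BP}$ already recorded in Theorem~6; this limiting identity fixes the surviving combination of secants and serves as the decisive consistency check that the closed form in the statement must satisfy.
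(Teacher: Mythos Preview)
Your approach coincides with the paper's: the paper offers no proof beyond the remark that the lemma is ``a direct consequence of the formula for the radius of the circumscribed circle of a spherical triangle'' (citing Odani, Formula~(4)), and you supply exactly that derivation together with the specialization via the key relation $\angle P_1BP_2=\widehat{P_1P_2}$ inherited from Lemma~1. Carrying your computation through actually yields
\[
\tan^2 R=\tfrac{1}{4}\sec^{2}\tfrac{1}{2}\widehat{P_1P_2}\,\sec^{2}\tfrac{1}{2}\widehat{BP_1}\,\sec^{2}\tfrac{1}{2}\widehat{BP_2},
\]
and your own consistency check against Theorem~6 confirms this exponent (as $P_1,P_2\to P$ the two factors $\sec^{2}\tfrac{1}{2}\widehat{BP_i}$ coalesce into the $\sec^{4}\tfrac{1}{2}\widehat{BP}$ appearing there, while the $\widehat{P_1P_2}$ factor tends to~$1$); the exponent~$4$ printed in the lemma is evidently a misprint for~$2$.
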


It is possible to give similar simple proofs for the equalities in the planar case, too. The equalities in Theorems 1 and 2 can be proved as a limiting case of the following lemma when $Q_1$ approaches $Q_2$.

\begin{lemma} A line passing through a point $O$ and points $Q_1$ and $Q_2$ on a parallel line are drawn. A line intersects these lines at $U$ and $T$, and the segments $OQ_1$ and $OQ_2$ at points $P_1$ and $P_2$, respectively, so that $|P_1P_2|=|Q_1Q_2|$ (see Figure \ref{fig7}). Then
\begin{enumerate}
\item
$|OU|+|TQ_2|=|UP_1|$ and $|OU|+|TQ_1|=|UP_2|$,
\item
the radii of circles through the points $O,P_1,P_2$ and $O,Q_1,Q_2$ are equal,
\item
if the distance between the parallel lines is 1, and the distances from the points $P_1$ and $P_2$ to the line $OU$ are $x_1$ and $x_2$, respectively, then $$\sin{\angle Q_1P_1T}\cdot \sin{\angle Q_2P_2T}=|OP_1|\cdot|OP_2| \cdot\sin^2{\angle TQ_1P_1}\cdot\sin^2{\angle TQ_2P_2},$$ $$x_1x_2\cdot|P_1T|\cdot |P_2T|=|TQ_1|\cdot |TQ_2|,$$ $$(1-x_1)(1-x_2)\cdot|UP_1|\cdot |UP_2|=|TQ_1|\cdot |TQ_2|.$$
\end{enumerate}
\end{lemma}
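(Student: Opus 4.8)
The plan is to introduce Cartesian coordinates adapted to the two parallel lines and to show that the hypothesis $|P_1P_2|=|Q_1Q_2|$ is equivalent to a single scalar identity, from which all the claimed equalities fall out with only routine algebra. Concretely, take the line through $O$ to be the $y$-axis (so that $U$ lies on it), take the parallel line to be $x=d$ (with $d=1$ in part (3)), and write $Q_1=(d,\beta_1)$, $Q_2=(d,\beta_2)$, so the rays $OQ_i$ are $y=(\beta_i/d)x$. Let the transversal be $y=sx+u$, so that $U=(0,u)$, $T=(d,sd+u)$, and $P_i=(x_i,(\beta_i/d)x_i)$ with $x_i=ud/(\beta_i-sd)$. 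Every length in the statement then becomes an elementary expression: for instance $|UP_i|=|x_i|\sqrt{1+s^2}$, $|P_iT|=|d-x_i|\sqrt{1+s^2}$, $|UT|=d\sqrt{1+s^2}$, $|TQ_i|=|sd+u-\beta_i|$, $|OP_i|=(|x_i|/d)\sqrt{d^2+\beta_i^2}$, and the angles are read off from direction vectors, e.g. (for $d=1$) $\sin\angle TQ_iP_i=1/\sqrt{1+\beta_i^2}$ and $\sin\angle Q_iP_iT=|\beta_i-s|/\big(\sqrt{1+\beta_i^2}\sqrt{1+s^2}\big)$.

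Next I would rewrite the hypothesis. Since $x_1-x_2=ud(\beta_2-\beta_1)/\big((\beta_1-sd)(\beta_2-sd)\big)$ and $|P_1P_2|=|x_1-x_2|\sqrt{1+s^2}$, the equation $|P_1P_2|=|Q_1Q_2|=|\beta_1-\beta_2|$ is equivalent to $|u|d\sqrt{1+s^2}=|(\beta_1-sd)(\beta_2-sd)|$, and hence to $|x_1x_2|=|u|d/\sqrt{1+s^2}$. This one relation does essentially all the work.

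Then I would dispatch the three parts. For (1): using the similarity $\triangle OUP_i\sim\triangle Q_iTP_i$ (alternate angles from the parallel lines together with the vertical angles at $P_i$), one sees that both asserted equalities are equivalent to the single identity $|OU|=|UP_1|\,|UP_2|/|UT|$; substituting the expressions above turns the right-hand side into $|x_1x_2|\sqrt{1+s^2}/d=|u|=|OU|$ by the key relation. For (2): by the circumradius formula $R=abc/(4K)$, the area of $\triangle OQ_1Q_2$ is $\tfrac12 d|\beta_1-\beta_2|$ and that of $\triangle OP_1P_2$ is $\tfrac{|x_1x_2|}{2d}|\beta_1-\beta_2|$; combined with $|OP_i|=(|x_i|/d)\sqrt{d^2+\beta_i^2}$ and $|P_1P_2|=|Q_1Q_2|$, both radii simplify to $\sqrt{d^2+\beta_1^2}\sqrt{d^2+\beta_2^2}/(2d)$. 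For (3) (with $d=1$): a short computation shows each of the three identities is again equivalent to $|x_1x_2|=|\beta_1-s||\beta_2-s|/(1+s^2)$ — indeed the two products $x_1x_2|P_1T||P_2T|$ and $(1-x_1)(1-x_2)|UP_1||UP_2|$ are literally the same expression $x_1x_2(1-x_1)(1-x_2)(1+s^2)$, while $|TQ_i|/|\beta_i-s|=|1-x_i|$ and $\sin^2\angle TQ_iP_i=1/(1+\beta_i^2)$ collapse the first identity as well — and this is exactly the key relation rewritten, since $|u|\sqrt{1+s^2}=|\beta_1-s||\beta_2-s|$ when $d=1$. Finally, letting $Q_1\to Q_2$ (so that $P_1,P_2\to P$ and the transversal becomes the tangent line) recovers Theorems 1 and 2.

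The only real care needed — and the single genuine obstacle — is the bookkeeping of signs and of the betweenness assumptions implicit in the figure: that $Q_1,Q_2$ lie on the same side, that each $P_i$ lies between $O$ and $Q_i$ and between $U$ and $T$ (so $0<x_i<d$), and that the transversal meets the configuration as drawn. These are precisely what allow the absolute values to be removed consistently and what justify the correspondence in the similarity $\triangle OUP_i\sim\triangle Q_iTP_i$ used in part (1); once they are fixed at the outset, the remainder is the routine algebra sketched above.
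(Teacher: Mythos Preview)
The paper does not actually prove Lemma~3: after the statement and Figure~7 it simply says ``It is possible to give similar simple proofs for the equalities in the planar case, too'' and moves on to Table~2 and the Conclusions. So there is nothing to compare against; your coordinate argument supplies strictly more than the paper does.

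On its own merits your proposal is correct. The reduction of the hypothesis $|P_1P_2|=|Q_1Q_2|$ to the single scalar relation $|x_1x_2|\sqrt{1+s^2}=|u|\,d$ (equivalently $|u|\,d\sqrt{1+s^2}=|(\beta_1-sd)(\beta_2-sd)|$) is the right organizing idea, and each of the stated equalities does collapse to it as you describe: the similarity $\triangle OUP_i\sim\triangle Q_iTP_i$ turns part~(1) into $|OU|=|UP_1|\,|UP_2|/|UT|$; the $abc/(4K)$ formula makes part~(2) a one–line cancellation once $|P_1P_2|=|Q_1Q_2|$ is used for the third side; and in part~(3) your observation that $x_1x_2|P_1T||P_2T|$ and $(1-x_1)(1-x_2)|UP_1||UP_2|$ are literally the same expression is a nice shortcut, while the substitution $|TQ_i|=|u|\,|1-x_i|/|x_i|$ closes the loop. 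Your own caveat about signs and betweenness is exactly the point that needs care, and it is enough to fix it once at the outset (e.g.\ $0<x_i<d$, $u$ and the $\beta_i-sd$ of one sign) so that all absolute values drop consistently; after that the algebra is routine.
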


\begin{figure}
{\includegraphics[scale=.5]{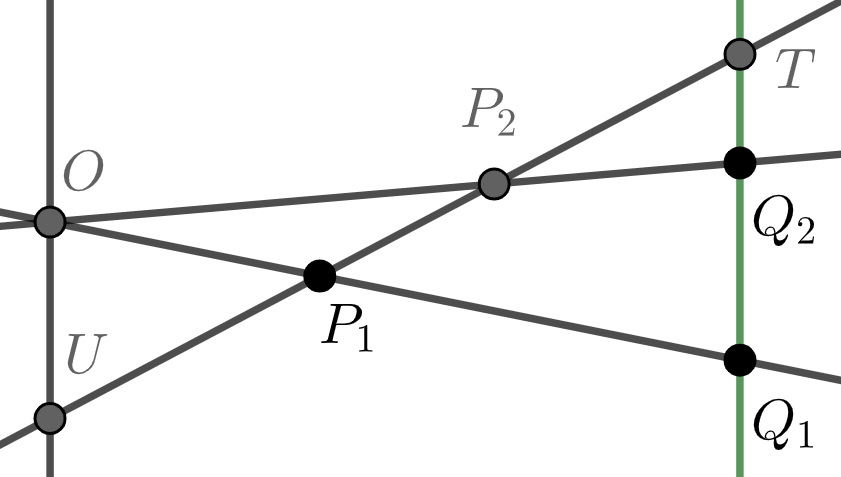}}
\caption{Lemma 3 for Planar Case.}
\label{fig7}
\end{figure}

See Table 2 for a complete picture of corresponding lemmas.
\begin{table}
\caption{Corresponding theorems in planar and spherical cases, and lemmata in Section 4 that can be used to prove these results.\label{TableA1}}

\begin{center}
\begin{tabular}{ |c|c|c|c|c|} 
 \hline
\textbf{Plane}& \textbf{Sphere}	 & \textbf{Plane}& \textbf{Sphere}\\
 \hline
Theorem 1 &Theorem 5 &Lemma 3, 1. \& 3.&Lemma 1\\
Theorem 2 &Theorem 6&Lemma 3, part 2.&Lemma 2\\
Theorem 3&Theorem 4& - & -\\
 \hline
\end{tabular}
\end{center}
\end{table}

\textbf{5. Conclusions}\label{sec5}

In the paper, the two Interception Curves, one on a plane and the other one on a sphere were studied. The extensive literature about the planar curve and the mentioned connections with Lemnicate constants and Plane geometry lemmas show that it is an important and interesting topic for Mathematics and its applications. The spherical curve and its properties naturally extend these results to a geometry different from planar. The interplay between the plane and spherical geometry results and the geometric properties of the curves (5) and (7) were very helpful to find and prove these properties. Evident symmetry between the planar and spherical cases helped us to find more properties and shorter and more elegant proofs for them. Analogous to the planar case, there is a spherical pursuit curve studied in \cite{roeser} (see also \cite{loria}, page 78 of Vol. II) and it would be interesting to compare it with the Spherical Interception Curve. It would also be interesting to consider all these questions in the context of hyperbolic geometry \cite{roeser} and other planar regions \cite{morley}.

\textbf{6. Appendix}

In Section 2, we found a parametrization (5) for the curve defined by (4). It is possible to write alternative parametrizations for this curve. Let us denote $z=\frac{y}{x}$ and $z'=p\ge 1$. Then we obtain from (4), $z=\sqrt{p^2-1}-xp$. Since $dz=pdx$ and $dz=\frac{p}{\sqrt{p^2-1}}dp-pdx-xdp$, we obtain a linear differential equation $\frac{dx}{dp}+\frac{x}{2p}=\frac{p}{2\sqrt{p^2-1}}$. Noting the initial condition $x=0, y=0$, the solution of (4) can also be written as $$\left\{x(p)=\frac{1}{\sqrt{p}}\int_1^p\frac{\sqrt{t}dt}{2\sqrt{t^2-1}}, y(p)=x(p)\sqrt{p^2-1}-(x(p))^2p\ (p\ge 1)\right\}.$$

Another parametrization of the solution can be written if $z=\sqrt{p^2-1}-xp$ is solved first for $x$ to get $x=\frac{\sqrt{p^2-1}}{p}-\frac{z}{p}$ and then noting that $dx=\frac{dz}{p}$, a linear differential equation $\frac{dz}{dp}-\frac{z}{2p}=\frac{1}{2p\sqrt{p^2-1}}$ is obtained. Its solution $z(p)=\sqrt{p}\int_1^p\frac{dt}{2t\sqrt{t}\sqrt{t^2-1}}$ gives us $$\left\{x(p)=\frac{\sqrt{p^2-1}}{p}-\frac{1}{\sqrt{p}}\int_1^p\frac{dt}{2t\sqrt{t}\sqrt{t^2-1}}, y(p)=x(p)\cdot \sqrt{p}\int_1^p\frac{dt}{2t\sqrt{t}\sqrt{t^2-1}}\  (p\ge 1)\right\}.$$

\end{document}